\documentclass[]{article}

\usepackage{amsmath}
\usepackage{amssymb} 
\usepackage{amsthm}
\usepackage{array}
\usepackage{graphicx}
\usepackage[square,numbers]{natbib}
\newtheorem{theorem}{Theorem}[section]

\usepackage{datetime}

\title{Distribution of twin primes in repeating sequences of prime factors}

\author{John K Sellers}

\begin{document}

\maketitle

\begin{abstract}
	We present a deterministic relationship between relative primes and twin primes in successively larger sequences of the natural numbers. This enables setting a finite lower limit on the occurrence of actual twin primes in an unbounded selection of domains.	
\end{abstract}

\section{Natural Numbers as Repeating  Sequences of Prime Factors}

The formulation presented here differs from other approaches to prime pairs as outlined in \cite{M} and which are covered in detail in \cite{BFI1,BFI2,GPY1,GPY2,Z}. Here we determine the locations of relative primes, prime to $P\le P_{k}$, in successively larger sequences of the natural numbers.  
Those are the only locations where actual prime numbers $P>P_{k}$ can appear. Therefore we refer to numbers in those locations as prospective prime numbers.

This approach is more direct and less abstract where the key is tracking prospective prime numbers not prime numbers themselves.
In defining a finite set covering all combinations of prime factors $P_{1}$, $P_{2}$, $P_{3}$, $\cdots,P_{k}$ relative to each other, we can count the number of prospective prime numbers and prospective twin primes in the set, specify their locations, and determine the uniformity of their distribution. 

The tie to actual twin primes occurs when a prospective twin prime, prime to $P\le P_{k}$, occurs between $P_{k}$ and $P_{k+1}^2$. Then they are actual twin primes. Given this, we can place a non-vacant lower bound on the number of such twin primes for an unbounded succession of specific choices of $P_{k}$.

\subsection{Numbers in Relation to Prime Factors $2$ and $3$}

If one considers a sequence of natural numbers greater than $5$ in terms of its relationships to prime factors $2$ and $3$ only, one sees a repeating pattern of six numbers relative to those factors as shown in Figure~\ref{T: elementalsequence}. The bottom row shows a useful representation, which we refer to as "sequence notation," that explicitly shows the count, $n$, of the repetitions of the $6$-number sequence and a count, "$i$", within each repeated sequence. The count within the repeating sequence shows an arbitrary but convenient choice for its start that has the two numbers prime to $2$ and $3$ as the first and third numbers in the repeating sequence.

\begin{figure}
	\begin{center}
		\renewcommand{\arraystretch}{1.5}
		\begin{tabular}{|c|c|c|c|c|c|}
			\hline 
			N & N+1 & N+2 & N+3 & N+4 & N+5 \\ 
			\hline 
			$\widetilde{P}$ & $2|$, $3|$ & $\widetilde{P}$ & $2|$ & $3|$ & $2|$ \\ 
			\hline 
			$N_{n,1}$ & $N_{n,2}$ & $N_{n,3}$ & $N_{n,4}$ & $N_{n,5}$ &$ N_{n,6}$ \\ 
			\hline 
		\end{tabular}
		\caption{ Underlying Sequence of Natural Numbers Relative to Prime Factors $2$ and $3$, The figure as an example shows an arbitrary starting point of $N=6n+5$. $\widetilde{P}$ represents a prospective prime number because it marks the only location where a prime number $>3$ may occur.} 
		\label{T: elementalsequence}
	\end{center}
\end{figure}

Each sequence repeats with the identical pattern of prime factors $2$ and $3$ because: $N_{n,i}=N_{n',i}+6(n-n'$) shows that both  $N_{n,i}$ and $N_{n',i}$ share the same combination of the prime factors $2$ and $3$. Therefore the two locations occupied by numbers prime to $2$ and $3$ in each $6$-number sequence represent the only locations where prime numbers can occur and each occurrence has  the potential to be a twin prime. Whether they are  prime numbers or twin primes of course depends on where they occur in the overall sequence of natural numbers in relation to all other prime factors. Since we retain the notation citing repetitions of the $6$-number sequence and the position within a given repetition of that sequence even as we consider larger sequences with other prime factors, we refer to the $6$-number sequence as the elemental sequence, where the first ($zero^{th}$), such elemental sequence starts at the number $5$, i.e.,

\[ 
N_{0,1}=5,N_{0,2}=6,N_{0,3}=7,N_{0,4}=8,N_{0,5}=9,N_{0,6}=10
\]

An immediate observation from this formulation is that any prime number, $P>3$, may be represented as either $5+6n$, Progression 1, or $7+6n$, Progression 2, where $(n \ge 0)$ or equivalently as: $6n' \pm 1,  (n' \ge 1)$. 

\subsection{Conversion in and out of sequence notation}\label{S: finding_$N_{n.i}$}
As just introduced we use the representation of natural numbers,  $N_{n, i}$, starting at  $N_{0, 1}=5$, where $i$ represents the number's position in the elemental sequence and $n$ represents which elemental sequence.  For example, $5,6,7,8,9,10$ represents the ${zero}^{th }$ elemental sequence. Then other natural numbers are represented by 

\begin{equation}\label{E: Sequence_of_natural_numbers_1}
	N=N_{n, i}=5+6n+(i-1) \quad \textrm{with} \quad i=1,2,3,4,5,6
\end{equation}

Given any natural number, $N$, one can find its representation in terms of its elemental sequence and part of that sequence by choosing $t= 0,1,2,3,4,5$ so that $6 |  (N-5-t)$, giving:

\begin{equation}\label{E: Sequence_of_natural_numbers_2}
	n= \frac{ (N-5-t)}{6} \quad \textrm{and} \quad i=t+1
\end{equation}

Testing for values of $t$ is easy, but one can also solve for $n$ and $i$ in terms of $N$ as follows:\\

Note that $N-5=\left\lfloor \frac{N-5}{6} \right\rfloor 6 +(N-5)\bmod{6}$. Then comparing this with Equation~(\ref{E: Sequence_of_natural_numbers_1}) gives:

\begin{equation}
	n=\left\lfloor \frac{N-5}{6} \right\rfloor\quad \textrm{and} \quad i=(N-5)\bmod{6}+1
\end{equation}\\

in turn giving:

\begin{equation}
	N=N_{ \left\lfloor\frac{N-5}{6} \right\rfloor,(N-5)\bmod{6}+1}	
\end{equation}

Numbers in the first or third position of the elemental sequence may be represented as the following arithmetic progressions:

\[
N_{n,1}=6n+5 \quad \textrm{and}\quad N_{n,3}=6n+7
\]

\subsection{Products of Numbers Represented in the Sequence Notation}\label{S: ProductsofNumbers}

Multiplication rules for Numbers in the first and third positions of the elemental sequence:

\begin{align}\label{E: sequenceproducts1}\notag
	N_{n_{i},1}N_{n_{j},1}&=N_{[6n_{i}n_{j}+5(n_{i}+n_{j})+3],3}     \\ 
	N_{n_{i},1}N_{n_{j},3}&=N_{[6n_{i}n_{j}+7n_{i}+5n_{j}+5],1}     \\\notag
	N_{n_{i},3}N_{n_{j},3}&=N_{[ 6n_{i}n_{j}+7(n_{i}+n_{j})+7],3}    \\ \notag
\end{align}

Define $P_{n,1}=N_{n,1}$ and $P_{n,3}=N_{n,3}$ when $N_{n,1}$ and $N_{n,3}$  respectively are prime numbers. Then In terms of products of prime numbers the multiplication equations become:
\begin{align}\label{E: sequenceproducts2}\notag
	P_{n_{i},1}P_{n_{j},1}&=N_{[6n_{i}n_{j}+5(n_{i}+n_{j})+3],3}     \\ 
	P_{n_{i},1}P_{n_{j},3}&=N_{[6n_{i}n_{j}+7n_{i}+5n_{j}+5],1}     \\\notag
	P_{n_{i},3}P_{n_{j},3}&=N_{[ 6n_{i}n_{j}+7(n_{i}+n_{j})+7],3}    \\ \notag
\end{align}

Summarizing in words: 

1) Any product of prime numbers $>3$ that involves an odd number of prime numbers from Progression 1 (all of the form $P_{n,1}$) produces a number in Progression 1: $N_{n',1}$.

2) Any other product of prime numbers $>3$ produces a number in Progression 2: $N_{n',3}$.

\section{Prospective Prime Numbers and Prospective twin primes in Successively Larger Sequences of The Natural Numbers}\label{S: TPC}

\subsection{Definitions and  Properties}\label{S: defs}

In this section we add some formalism to a discussion on the sequences of prime factors  underlying the natural numbers. Our intention is to add some precision and clarity to this view as we consider successively larger sequences of natural numbers and their implications for the occurrence of prime numbers and twin primes.

It is key to this perspective of the natural numbers to note that the number $N$ and $N+n\prod_{i=1}^{k}P_{i}$, where $n$ is an integer, $0 \le n \le P_{k+1}-1$, have the same occurrence of distinct prime factors among $P \le P_{k}$. Therefore the sequences 
\[
N, N+1, N+2, \cdots
\]
and
\[
N+n\prod_{i=1}^{k}P_{i} \; , \; N+n\prod_{i=1}^{k}P_{i}+1 \; , \; N+n\prod_{i=1}^{k}P_{i}+2, \cdots
\]
have the same sequence of prime factors $P \le P_{k}$ and that sequence repeats at a frequency of every $\prod_{i=1}^{k+1}P_{i}$ numbers.

Define $S_{k}$ as the following ordered sequence of natural numbers:

\begin{equation}\label{E: def$S_{k}$}
	S_{k} = \left\{N_{i}:   5 \le N_{i} \le 4 + \prod_{i=1}^{k}P_{i} \quad \& \quad N_{i+1}=N_{i}+1 \right\}
\end{equation}  

Note that sets $S_{k}$ each contain $\prod_{i=1}^{k}P_{i}$ successive natural numbers, all starting with the number $5$, but ending on successively larger numbers as we increment $k$.

Define $\psi_{k}$ as the underlying ordered sequence of prime factors $P \le P_{k}$ found in $S_{k}$. For example, given that $S_{2}=\{5,6,7,8,9,10  \}$, then $\psi_{2}=\{\widetilde{P}, 2 \cdot 3, \widetilde{P}, 2,3,2  \}$, where $\widetilde{P}$ indicates a number prime relative to all $P \le P_{2}$ (i.e., $P \le 3$). The values in $\psi_{2}$ are the distinct prime factors $P \le P_{2}$ found in the numbers of $S_{2}$ that are in the same sequencial position as the corresponding values in $\psi_{2}$. 

When we extend $S_{k}$ to $S_{k+1}$, the underlying sequence of prime factors  $\psi_{k}$ of $S_{k}$ repeats $P_{k+1}$ times in $S_{k+1}$, giving:

\begin{equation}\label{E: repeat_psi}
	\widetilde{\psi}_{k+1} =\{\psi_{k}^{(0)},\psi_{k}^{(1)}, \psi_{k}^{(2)}, \cdots, \psi_{k}^{(P_{k+1}-1)} \}
\end{equation}

The tilde on $\psi_{k+1}$ indicates the tentative representation of the sequence $\psi_{k+1}$ because the locations of prime factor $P_{k+1}$ are not represented in $\psi_{k}$.  Each sub-sequence $\psi_{k}^{(i)}$, the $i^{th}$ repetition of $\psi_{k}$, is identical in each of it's repetitions. The occurrence $\widetilde{P}$ in in the repeated sequences of $\psi_{k}$ may or may not be prime numbers, but they are the only locations where new prime numbers, i.e., $P > P_{k}$, can occur, because every other position in the sequence and its repetitions has one or more prime factors $P \le P_{k}$.  

For example:
\begin{multline}\notag
	S_{3}= \{/5,6,7,8,9,10/11,12,13,14,15,16/17,18,19,20,21,22/23,24,25,26,27,28/\\29,30,31.32.33.34/ \}
\end{multline}
The slash is used merely to indicate underlying groups of numbers corresponding to repetitions of $\psi_{2}^{(i)}$.  The underlying prime factor sequence for $S_{3}$ is then:
\begin{multline}\notag
	\psi_{3} = \{5,2\cdot3,\widetilde{P},2,3,2\cdot 5/\widetilde{P},2\cdot3,\widetilde{P},2,3\cdot5,2/\widetilde{P},2\cdot3,\widetilde{P},2\cdot5,3,2/\widetilde{P},2\cdot3,5,2,3,3/\\ \widetilde{P},2\cdot3 \cdot 5,\widetilde{P},2,3,2/ \}
\end{multline}
and $\psi_{3}$\footnote{
	One could think of $\psi_{3}$ as a 30-number elemental sequence relative to prime factors $2,3,5$ analogous to the 6-number sequence defined in Figure~\ref{T: elementalsequence}. The numbers would be represented as $N=N_{5n,i}$, where $1\le i \le 30$, with prospective prime numbers, prime to $P\le 5$ occuring at: $i=3,7,9,13,15,19,25,\; \textrm{and}\; 27$. The conversion would then be: $N_{5n,i}=5+6\cdot 5n+(i-1)$.
}
has the underlying sequence $\psi_{2}$:
\begin{multline}\notag
	\widetilde{\psi}_{3} = \{/\widetilde{P},2\cdot3,\widetilde{P},2,3,2/\widetilde{P},2\cdot3,\widetilde{P},2,3,2/\widetilde{P},2\cdot3,\widetilde{P},2,3,2/\widetilde{P},2\cdot3,\widetilde{P},2,3,3/\\ \widetilde{P},2\cdot3,\widetilde{P},2,3,2/ \}
\end{multline}
One can see from this example that the transition $\psi_{2} \rightarrow \psi_{3}$ involves repeating $\psi_{2}$ $P_{3}=5$ times in succession then include occurrences of prime factor $5$.

When we extend a sequence using repetitions of the underlying sequence of prime factors, not all numbers prime to those factors will be actual prime numbers. Therefore we refer to locations in the sequence where prime factors must appear as prospective prime numbers and when it is necessary to make that distinction we represent prospective prime numbers as $\widetilde{P}$.  Prospective prime numbers in $S_{k}$ are all numbers prime to $P \le P_{k}$. We do not consider a prime number prime to itself.  Therefore prospective prime numbers represent the only locations in $S_{k}$ where prime numbers $P > P_{k}$ can occur.

Define $S_{k}^{(i)}$ as subsets of $S_{k}$ representing each successive group of $\prod_{j=1}^{k-1}P_{j}$ numbers, which   correspond to instances of $\psi_{k-1}^{(i)}$ found in $\psi_{k}$.  Then, if we represent $S_{k}^{(i)} + N$ as adding the number $N$ to each number within $S_{k}^{(i)}$, then given $i > j$   we have:

\begin{equation}
	S_{k}^{(i)} = S_{k}^{(j)} +(i-j)\prod_{l=1}^{k-1}P_{l}
\end{equation}

Note that this relationship between $S_{k}^{(i)}$ and $S_{k}^{(j)}$ means that numbers corresponding to the same locations within $S_{k}^{(i)}$ and $S_{k}^{(j)}$ have the same prime factors from among $P \le P_{k-1}$.  That is, if $x_{n}^{(i)}$ is the $n^{th}$ number in $S_{k}^{(i)}$ and $x_{n}^{(j)}$ is the $n^{th}$ number in $S_{k}^{(j)}$ then for all $P \le P_{k-1}$: $P | x_{n}^{(i)}$ if and only if $P | x_{n}^{(j)}$. The fact that $0 \le (i-j) \le P_{k}-1$ is consistent with this rule, because $(i-j)$ can only have prime factors $P \le P_{k-1}$.

We can extend the definition of $S_{k}^{(i)}$ to successively smaller subsets of $S_{k}$ using:

\begin{align}\notag
	S_{k} &= \left\{ S_{k}^{(0)},  S_{k}^{(1)}, \cdots , S_{k}^{(P_{k}-1)} \right\} \\\notag
	&= \left\{S_{k-1}^{(0)},  S_{k-1}^{(1)}, \cdots , S_{k-1}^{(P_{k}P_{k-1}-1 )} \right\} \\\notag
	& \qquad \vdots \\\notag
	&=\left\{ S_{2}^{(0)},  S_{2}^{(1)}, \cdots , S_{2}^{( \prod_{j=3}^{k} P_{j} -1 )} \right\} \\\notag
\end{align}

Note that for $l > k$,  $S_{k} \subset S_{l}$.  

This concept of nested subsets can be applied to $\psi_{k}$ as well, however there is some loss of information since the lower level sequences represent fewer prime factors. Never-the-less it is important to realize the repetitive nature of prime factors associated with any set or subset of prime numbers. The nesting for $\psi_{k}$ looks like the following:

\begin{align}\notag
	\psi_{k} &= \left\{ \psi_{k-1}^{(0)},  \psi_{k-1}^{(1)}, \cdots , \psi_{k-1}^{(P_{k}-1)} \right\} \\\notag
	&= \left\{\psi_{k-2}^{(0)},  \psi_{k-2}^{(1)}, \cdots , \psi_{k-2}^{(P_{k}P_{k-1})-1} \right\} \\\notag
	& \qquad \vdots \\\notag
	&=\left\{ \psi_{2}^{(0)},  \psi_{2}^{(1)}, \cdots , \psi_{2}^{( \prod_{j=3}^{k} P_{j} -1)} \right\} \\\notag
\end{align}

Given this nesting of subsets, any such prime factor sequence represented by $\psi_{k} \subset \psi_{l}$, where $k <l$, continues to repeat throughout the natural numbers, e.g., as $l \rightarrow \infty$. While this is important from the fact that any combinations of prime factors produces an infinitely repeating sequence of those factors within the natural numbers, however the location of prospective prime numbers differ between the different levels of $\psi_{k}$.

We refer to $\psi_{2}=\{\widetilde{P}, 2 \cdot 3, \widetilde{P}, 2,3,2  \}$ as the elemental sequence of prime factors and correspondingly refer to $S_{2}^{(i)}$, where $0 \le i \le P_{k}-1$, as elemental sequences of any set $S_{k}$.  We can locate individual numbers precisely by their location within an elemental sequence representing them as: $N_{n,i} \in S_{2}^{(n)}$, where:

\[
S_{2}^{(n)} =\left\{N_{n,1}, N_{n,2}, N_{n,3}, N_{n,4}, N_{n,5}, N_{n,6}  \right\} 
\]

Using this sequence notation, note that prime numbers can only occur at $N_{n,1}$ or $N_{n,3}$, because all other $N_{n,i}$ have a prime factor of $2$, $3$ or both.

For our purposes we start the progressions at $N_{0,1}=5$ and $N_{0,3}=7$.  Then given that $N_{n+1,i} = N_{n,i}+6$, the numbers $N_{n,1}$ and $N_{n,3}$ represent the following progressions of numbers:

\[
N_{n,1}=6n+5 \quad \textrm{and} \quad N_{n,3} = 6n+7
\]

All prime numbers $P > 3$ therefore fall in one or the other of these two progressions although clearly, not all numbers in each progression is a prime number.

It is useful to define $P_{m,1} = N_{m,1}$ and $P_{m,3}=N_{m,3}$ when $N_{m,1}$ and $N_{m,3}$ respectively, are prime numbers. Likewise, it is also useful to define $\widetilde{P}_{m,1} = N_{m,1}$ and $\widetilde{P}_{m,3}=N_{m,3}$ when $N_{m,1}$ and $N_{m,3}$ respectively, are prospective prime numbers. 

\subsection{Propagation of Prospective Prime Numbers}\label{S: PropofProPriNumbers}

When we extend $S_{k}$ to $S_{k+1}$ we also extend $\psi_{k}$ to $\psi_{k+1}$, first by $P_{k+1}$ repetitions of $\psi_{k}$,\\ $\widetilde{\psi}_{k+1}=\left\{\psi_{k}^{(0)},\psi_{k}^{(1)}\psi_{k}^{(2)},\cdots,\psi_{k}^{(P_{k+1}-1)},\right\}$, where each $\psi_{k}^{(i)}$ represents the identical repeating sequence of prime factors $P\le P_{k}$ throughout $S_{k+1}$. Then we identify the locations of prime factor $P_{k+1}$, whereby  $\widetilde{\psi}_{k+1} \longrightarrow \psi_{k+1}$.

$S_{k}$ has $\prod_{i=1}^{k}P_{i}$ numbers: $N \in S_{k} \longrightarrow 5 \le N \le 4+\prod_{i=1}^{k}P_{i}$. When we extend $S_{k}$ to $S_{k+1}$ to include all combinations of prime factors $P \le P_{k+1}$ relative to each other we need to incude $P_{k+1}$ repetitions of $\prod_{i=1}^{k}P_{i}$ numbers.

Given

\[
n\prod_{i=1}^{k}P_{i}=\left\lfloor \frac{n\prod_{i=1}^{k}P_{i}}{P_{k+1}} \right\rfloor P_{k+1} + \left(n\prod_{i=1}^{k}P_{i}\right)\bmod{P_{k+1}}
\]

$\left(n\prod_{i=1}^{k}P_{i}\right)\bmod{P_{k+1}}=0$ if and only if $n$ is a multiple of $P_{k+1}$ and $P_{k+1}$ itself is the least such multiple.

Given any $N \in S_{k}$ and and any integer $m\ge 0$, then for all $P \le P_{k}$:
\[
P | N \leftrightarrow P | \left( N+m\prod_{i=1}^{k}P_{i} \right)
\]

Therefore the sequence of occurrence of all prime factors $P\le P_{k}$ in $S_{k}$ (i.e. $\psi_{k}$) is repeated identically in each successive sequence of $\prod_{i=1}^{k}P_{i}$ numbers.

If we limit $m$ to $0 \le m \le P_{k+1}-1$ then $\left( N+m\prod_{i=1}^{k}P_{i} \right) \in S_{k+1}$. 

Also, if we represent $S_{k+1}$ as:

\[
S_{k+1}=\left\{S_{k+1}^{(0)},S_{k+1}^{(1)},\cdots,S_{k+1}^{(P_{k+1}-1)}\right\}
\]

where each $S_{k+1}^{(i)}$ represents one of a succession of $\prod_{i=1}^{k}P_{i}$ numbers, then:

\[
S_{k+1}^{(0)}=S_{k}
\]

and for any $N\in S_{k}$ and $N'=N+m\prod_{i=1}^{k}P_{i}$, then $N' \in S_{k+1}^{(m)}$, where for all $P\le P_{k}$:

\[
P | N \longleftrightarrow P | N'
\]

Each repetition of the subsequence $\psi_{k}^{(i)}$ in $\psi_{k+1}$ represents  the fact that numbers prime to all $P \le P_{k}$ occur at the same locations within each instance of $S_{k+1}^{(i)}$.  

Numbers in $S_{k}$ that are prime to all $P \le P_{k}$ are what we have defined as prospective prime numbers in $S_{k}$. They can only occur at $N_{m,1}$ or $N_{m,3}$ in their respective elemental sequences. Those prospective prime number locations in $\psi_{k}$ are repeated $P_{k+1}$ times as an underlying sequence to $\psi_{k+1}$.  All of those repetitions of prospective prime numbers from $S_{k}$ going to $S_{k+1}$ remain prime to $P\le P_{k}$. However, the prime factor $P_{k+1}$ will occur in exactly one instance of the $P_{k+1}$ repetitions of each prospective prime number from $S_{k}$. Exactly one instance, because in the $P_{k+1}$ repetitions of $\psi_{k}$ the prime factor $P_{k+1}$ occurs exactly once in each position of $\psi_{k}$ creating a sequene of numbers of length $\prod_{i=1}^{k+1}P_{i}$ in $S_{k+1}$. 

The fact that $\frac{\prod_{i=1}^{k+1}P_{i}}{P_{k+1}}=\prod_{i=1}^{k}P_{i}$ requires that $P_{k+1}$ occurs exactly once in each position of the $\prod_{i=1}^{k}P_{i}$-number sequence spread throughout its $P_{k+1}$ repetitions in $S_{k+1}$. If $P_{k+1}$ occured as a prime factor within $S_{k+1}$ at two instances of the same repeated prospective prime number, $\widetilde{P} \in S_{k}$, i.e.,  
\[
P_{k+1} | \left(\widetilde{P}+m\prod_{i=1}^{k}P_{i}\right)\quad \textrm{and\quad} P_{k+1} | \left(\widetilde{P}+m'\prod_{i=1}^{k}P_{i}\right)
\]
where  $0 \le m,m' \le P_{k+1}-1$, then necessarily $P_{k+1} | (m'-m)$ which contradicts the condition that $m, m' < P_{k+1}$.

Also, given $N \in S_{k}$ where for some  $P \le P_{k}$ if $P | N$ then $P | \left(N+m\prod_{i=1}^{k}P_{i}\right)$, therefore, $N$ cannot be a prospective prime number when extended into $S_{k+1}$.  Therefore, all prospective prime numbers in $S_{k+1}$, i.e. numbers in $S_{k+1}$ prime to all $P\le P_{k+1}$, result from extensions of prospective prime numbers $\widetilde{P} \in S_{k}$, via $\widetilde{P}+m\prod_{i=1}^{k}P_{i}$, where $0 \le m \le P_{k+1}-1$. 

We can now state the following theorem:

\begin{theorem}\label{T: numpropinsk}  Given any prime number, $P_{k} > 2$, let $n_{k}^{\widetilde{p}}$ be the number of prospective prime numbers $\widetilde{P}$, prime relative to all $P \le P_{k}$, where $P_{k} < \widetilde{P}< 4+\prod_{i=1}^{k} P_{i}$.\footnote{Note that $4+\prod_{i=1}^{k} P_{k}$ cannot be a prime number, because it is an even number. Also, we assume that a prime number is not prime relative to itself.} Then defining $n_{1}^{\widetilde{p}}=1$: $n_{k}^{\widetilde{p}}=(P_{k}-1)n_{k-1}^{\widetilde{p}}=\prod_{i=2}^{k}(P_{i}-1)$.
\end{theorem}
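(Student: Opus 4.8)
The plan is to establish the recursion $n_k^{\widetilde p} = (P_k-1)\,n_{k-1}^{\widetilde p}$ by induction on $k$, drawing on the propagation facts proved in Section~\ref{S: PropofProPriNumbers}, and then to unfold the recursion down to $n_1^{\widetilde p}=1$ (equivalently, using the empty-product convention $\prod_{i=2}^{1}(P_i-1)=1$) to obtain the closed form. For the base case I would take $k=2$ and simply inspect $S_2=\{5,6,7,8,9,10\}$: its only numbers prime to every $P\le 3$ are $5$ and $7$, both larger than $P_2=3$, so $n_2^{\widetilde p}=2=(P_2-1)\,n_1^{\widetilde p}$, which also motivates the definition $n_1^{\widetilde p}=1$ coming from $S_1=\{5,6\}$.

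For the inductive step, fix $k\ge 3$ and assume $n_{k-1}^{\widetilde p}$ counts the prospective primes of $S_{k-1}$. The key is the correspondence already set up in Section~\ref{S: PropofProPriNumbers}: every prospective prime of $S_k$ has the form $\widetilde P+m\prod_{i=1}^{k-1}P_i$ for a prospective prime $\widetilde P$ of $S_{k-1}$ and an integer $0\le m\le P_k-1$; conversely, for each such $\widetilde P$ all $P_k$ of the numbers $\widetilde P+m\prod_{i=1}^{k-1}P_i$ remain prime to every $P\le P_{k-1}$, and \emph{exactly one} of them is divisible by $P_k$. Hence each of the $n_{k-1}^{\widetilde p}$ prospective primes of $S_{k-1}$ spawns exactly $P_k-1$ prospective primes of $S_k$. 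These families are pairwise disjoint — the starting values $\widetilde P$ lie in an interval of length $\prod_{i=1}^{k-1}P_i$, so $\widetilde P+m\prod_{i=1}^{k-1}P_i=\widetilde P'+m'\prod_{i=1}^{k-1}P_i$ forces $\widetilde P=\widetilde P'$ and $m=m'$ — and together they exhaust the prospective primes of $S_k$; summing gives $n_k^{\widetilde p}=(P_k-1)\,n_{k-1}^{\widetilde p}$, and telescoping yields $n_k^{\widetilde p}=\prod_{i=2}^{k}(P_i-1)$.

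The point requiring care — and the only real obstacle — is the strict inequality $P_k<\widetilde P$ in the statement, i.e. making sure the sets counted at level $k-1$ and level $k$ match up correctly at the lower endpoint. I would argue that any number in $S_{k-1}$ prime to all $P\le P_{k-1}$ is in fact $\ge P_k$, with equality only for $\widetilde P=P_k$ itself (its smallest prime factor exceeds $P_{k-1}$, hence is $\ge P_k$), and that for that exceptional $\widetilde P=P_k$ the removed index is exactly $m=0$, since $\gcd\!\left(\prod_{i=1}^{k-1}P_i,\,P_k\right)=1$ forces $P_k\mid\bigl(P_k+m\prod_{i=1}^{k-1}P_i\bigr)$ only at $m=0$. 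So the value $P_k$ is deleted and every surviving prospective prime of $S_k$ genuinely exceeds $P_k$, so the induction counts precisely the quantity in the theorem.

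As a sanity check I would also note the agreement with a direct totient count: the prospective primes of $S_k$ are exactly the residues modulo $\prod_{i=1}^{k}P_i$ that are coprime to $\prod_{i=1}^{k}P_i$, so $n_k^{\widetilde p}=\phi\!\left(\prod_{i=1}^{k}P_i\right)=\prod_{i=1}^{k}(P_i-1)=\prod_{i=2}^{k}(P_i-1)$ since $P_1-1=1$, matching the inductive result.
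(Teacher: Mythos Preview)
Your proof is correct and follows essentially the same route as the paper's: both establish the recursion $n_k^{\widetilde p}=(P_k-1)n_{k-1}^{\widetilde p}$ by observing that each prospective prime of $S_{k-1}$ propagates to $P_k$ copies in $S_k$, exactly one of which picks up the factor $P_k$, and then verify the base cases $k=2,3$ by direct inspection. Your version is somewhat more careful than the paper's in two respects---you explicitly check disjointness of the propagated families and you address the lower-endpoint condition $P_k<\widetilde P$ (showing that the single value $\widetilde P=P_k$ is precisely the one deleted at $m=0$)---and your closing totient remark is exactly the observation the paper makes immediately after its proof.
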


\begin{proof}:  Consider the sequence of natural numbers $5 \rightarrow 4 + \prod_{i=1}^{k}P_{i}$.  This sequence consists of $\prod_{i=1}^{k}P_{i}$ sequential natural numbers that includes all combinations of prime factors  $P \le P_{k}$ situated relative to each other. This sequence of those prime factors repeats in each successive group of $\prod_{i=1}^{k}P_{i}$ natural numbers without end throughout the natural numbers. Assume this sequence has, $n_{k}^{\widetilde{p}}$ numbers prime to prime numbers $P \le P_{k}$. Then consider the extension of this sequence to the sequence of numbers $5 \longrightarrow 4+\prod_{i=1}^{k+1}P_{i}$. This involves $P_{k+1}$ repetitions of the underlying prime factor sequence associated with the numbers $5 \rightarrow  \prod_{i=1}^{k}P_{i}$. Therefore there are $P_{k}n_{k}^{\widetilde{p}}$ numbers prime to $P<P_{k}$ in the extended sequence. Since the prime factor $P_{k+1}$ occurs exactly once in each position of the underlying $\prod_{i=1}^{k}P_{i}$ prime factor sequence spread throughout its $P_{k+1}$ repetitions, each of the numbers prime to $P \le P_{k}$ has the prime factor  $P_{k+1}$ exactly once during the $P_{k+1}$ iterations of the underlying sequence. Therefore there are $n_{k+1}^{\widetilde{p}}=P_{k+1}n_{k}^{\widetilde{p}} - n_{k}^{\widetilde{p}}=(P_{k}-1)n_{k}^{\widetilde{p}}$ numbers prime to all $P \le P_{k+1}$ within the sequence $5 \rightarrow 4 + \prod_{i=1}^{k+1}P_{i}$. If then we define $n_{1}^{\widetilde{p}}=1$, to initialize the succession of prime factor sequences, we have: $n_{k}^{\widetilde{p}}=\prod_{i=2}^{k}(P_{i}-1)$. Starting with $k=2$ we have the sequence $5, 6, 7, 8, 9, 10$ where only $5$ and $7$ are prime to $2$ and $3$ giving $n_{2}^{\widetilde{p}}=2$. Then extending this to include prime factor $5$ we get the sequence $5, 6, \cdots, 34$, where only $7, 11, 13, 17, 19, 23, 29,$ and $31$ are prime to $2, 3,$ and $5$, giving $n_{3}^{\widetilde{p}}=8$, which verifies the initial cases of the formula $n_{k}^{\widetilde{p}}=\prod_{i=2}^{k}(P_{i}-1)$ and completes the proof.
\end{proof}

We see from Theorem~\ref{T: numpropinsk} that $n_{k}^{\widetilde{p}}$ is the number of prospective prime numbers in $S_{k}=\left\{5\longrightarrow 4+\prod_{i=1}^{k}P_{i}\right\}$. That is the number of numbers in positions $N_{n,1}$ and $N_{n,3}$ in $S_{k}$ that are prime to all $P \le P_{k}$.

Note that $n_{k}^{\widetilde{p}}=\prod_{i=2}^{k}(P_{i}-1)$ alternatively results from using the number $N=\prod_{i=1}^{k}P_{i}$ in Euler's totient function:\cite{FG}
\[
\varphi(N)=N\prod_{P|N}\left(1-\frac{1}{P}\right) \longrightarrow \prod_{i=1}^{k}P_{i}\prod_{i=1}^{k}\left(1-\frac{1}{P_{i}}\right) = \prod_{i=1}^{k}P_{i}\left(1-\frac{1}{P_{i}}\right)=\prod_{i=2}^{k} (P_{i} -1 )
\]
which has he same interpretation.

The average density of prospective prime numbers among all numbers in the set of natural numbers $S_{k}$ is :

\begin{equation}\label{E: densityPPN-1}
	\rho^{\widetilde{P}}_{k} = \frac{\prod_{i=1}^{k} (P_{i} -1 )}{\prod_{j=1}^{k}P_{j}} = \prod_{i=1}^{k} \left(1-\frac{1}{P_{k}} \right)
\end{equation}

This can be viewed as a partial or finite reciprocal of Euler's product for the Riemann Zeta function with $s=1$:

\begin{equation}\label{E: Zetak}
	\zeta_{k}(s=1)=\prod_{i=1}^{k} \left(1-\frac{1}{P_{k}} \right)^{-1}
\end{equation}

This reciprocal version of Equation~(\ref{E: densityPPN-1}) represents the average number of natural numbers per prospective prime number in $S_{k}$ or the average spacing of prospective prime numbers among all numbers in $S_{k}$.

Letting $k \rightarrow \infty$ we get the full Riemann Zeta function for $s=1$ which diverges:

\begin{equation}\label{E: Zeta}
	\zeta(1)=\prod_{i=1}^{\infty} \left(1-\frac{1}{P_{k}} \right)^{-1} \longrightarrow \infty
\end{equation}
which means, like with actual prime numbers, the average density of prospective prime numbers becomes vanishingly small in $S_{k}$ as $k \rightarrow \infty$.

Next consider the fraction of prospective prime numbers in $S_{k}$ that are actual prime numbers:

\begin{equation}\label{E: prime2proprimeratio}
	\boldsymbol{\pi}_{k}^{\widetilde{P}}=\frac{\boldsymbol{\pi}\left(4+\prod_{i=1}^{k}P_{i}\right)-k}{n_{k}^{\widetilde{P}}}
\end{equation}

$k$ appears in the numerator because $P \le P_{k}$ are not considered as prospective prime numbers in $S_{k}$, because they are already established prime numbers within this concept and are not considered prime to themselves. 

Table~\ref{T: numericalevalpsi} gives numerical examples of Equation~(\ref{E: prime2proprimeratio}) for $k = 2 \rightarrow 10$.

\begin{table}[h]
	\begin{center}
		\caption{Numerical evaluation of $\boldsymbol{\pi}_{k}^{\widetilde{P}}$ for small $k$.} 
		\renewcommand{\arraystretch}{1.5}
		\begin{tabular}{|c|c|c|c|}
			\hline 
			$k$ & $\boldsymbol{\pi}\left(4+\prod_{i=1}^{k}P_{i}\right)-k$ & $n_{k}^{\widetilde{P}}$ & $\boldsymbol{\pi}_{k}^{\widetilde{P}}$ \\ 
			\hline 
			2 & $\boldsymbol{\pi}(10)-2=2$ & $\prod_{i=1}^{2}(P_{i}-1)=2$ & 1 \\ 
			\hline 
			3 & $\boldsymbol{\pi}(34)-3=8$ & $\prod_{i=1}^{3}(P_{i}-1)=8$ & 1 \\ 
			\hline 
			4 & $\boldsymbol{\pi}(214)-4=43$ & $\prod_{i=1}^{4}(P_{i}-1)=48$ & .896 \\ 
			\hline 
			5 & $\boldsymbol{\pi}(2314)-5=339$ & $\prod_{i=1}^{5}(P_{i}-1)=480$ & .706 \\ 
			\hline 
			6 & $\boldsymbol{\pi}(30,034)-6=3,242 $ & $\prod_{i=1}^{6}(P_{i}-1)=5,760$ &.563  \\ 
			\hline 
			7 & $\boldsymbol{\pi}(510,514)-7=42,204 $ & $\prod_{i=1}^{7}(P_{i}-1)=92,160$ & .458 \\ 
			\hline 
			8 & $\boldsymbol{\pi}(9,699,694)-8=646,021 $ & $\prod_{i=1}^{8}(P_{i}-1)=1,658,880$ & .389 \\ 
			\hline 
			9 & $\boldsymbol{\pi}(223,092,874)-9=12,283,522 $ & $\prod_{i=1}^{9}(P_9{i}-1)=36,495,360$ &.337  \\ 
			\hline 
			10 & $\boldsymbol{\pi}(6,469,693,234)-10=300,369,786 $ & $\prod_{i=1}^{10}(P_{i}-1)=1,021,870,080$ & .294 \\ 
			\hline  
		\end{tabular}
		\label{T: numericalevalpsi}
	\end{center}
\end{table}

In order to evaluate the trend in $\boldsymbol{\pi}_{k}^{\widetilde{P}}$ for large $k$ we use the approximation $\boldsymbol{\pi}(N)\approx\frac{N}{\ln{N}}$ in Equation~(\ref{E: prime2proprimeratio}) giving:

\begin{align}\label{E: fracprimes}\notag
	\boldsymbol{\pi}_{k}^{\widetilde{P}}\approx& \left[\frac{4+\prod_{i=1}^{k}P_{i}}{\ln{\left(4+\prod_{i=1}^{k}P_{i}\right)}}-k\right]\cdot \frac{1}{\prod_{i=1}^{k}(P_{i}-1)} \\
	\approx & \frac{1}{\ln{\left(4+\prod_{i=1}^{k}P_{i}\right)}}\cdot 
	\left[\frac{ 4+\prod_{i=1}^{k}P_{i}}{\prod_{i=1}^{k}(P_{i}-1)} \right]-\frac{k}{\prod_{i=1}^{k}(P_{i}-1)}
\end{align}

Ignoring the 4 relative to the product term gives:

\begin{align}\label{E: fracprimes2}
	\boldsymbol{\pi}_{k}^{\widetilde{P}}
	\approx & \frac{1}{\sum_{i=1}^{k}\ln{P_{i}}}\cdot \prod_{i=1}^{k}\left(1-\frac{1}{P_{i}}\right)^{-1}-\frac{k}{\prod_{i=1}^{k}(P_{i}-1)}
\end{align}

Given that prime numbers have no upper bound,  $\boldsymbol{\pi}_{k}^{\widetilde{P}}$ must remain positive as $k\rightarrow \infty$ and the negative term clearly tends toward zero. It turns out the negative term decreases much faster than the first term and therefore can be ignored. Ignoring the negative term is equivalent to redefining $\boldsymbol{\pi}_{k}^{\widetilde{P}}$ as the ratio of all prime numbers in $S_{k}$, i.e. including $P\le P_{k}$, to all prospective prime numbers in $S_{k}$. However, the number of prospective prime numbers grows multiplicative while the number of prime numbers less than or equal to $P_{k}$, i.e. $k$, grows linearly and therefore $P\le P_{k}$ can be ignored.

The reciprocal summation term, which is equal to $\ln{\left(\prod_{i=1}^{k}P_{i}\right)}$, tends to zero while the multiplication term tends to infinity as $k\rightarrow \infty$. Ignoring the negative term, look at the ratio:

\[
\frac{\boldsymbol{\pi}_{k+1}^{\widetilde{P}}}{\boldsymbol{\pi}_{k}^{\widetilde{P}}}\approx
\left(\frac{P_{k+1}}{P_{k+1}-1}\right)\cdot \left[\frac{1}{1+\frac{\ln{P_{k+1}}}{\sum_{i=1}^{k}\ln{P_{i}}}}\right]
\]

The fraction in the denominator in the square brackets is clearly less than 1, so let $\frac{\ln{P_{k+1}}}{\sum_{i=1}^{k}\ln{P_{i}}}=\frac{1}{\alpha_{k}}$ where $\alpha_{k}>1$, giving:

\begin{equation}\label{E: fixedratiopri2propri}
	\frac{\boldsymbol{\pi}_{k+1}^{\widetilde{P}}}{\boldsymbol{\pi}_{k}^{\widetilde{P}}}\approx
	\left(\frac{P_{k+1}}{P_{k+1}-1}\right)\cdot \left(\frac{\alpha_{k}}{\alpha_{k}+1}\right)
\end{equation}

Where:

\[
\alpha_{k}\ln{P_{k+1}}=\sum_{i=1}^{k}\ln{P_{i}}=\ln{\left(\prod_{i=1}^{k}P_{i}\right)} \longrightarrow P_{k+1}^{\alpha_{k}}=\prod_{i=1}^{k}P_{i}
\]

In Equation~(\ref{E: fixedratiopri2propri}) the first fractional factor is asymtotic to 1 from above while the second fractional factor is asymtotic to 1 from below and their product is asymtotic to 1 from below (see Table~\ref{T: P2proPratio}).  Therefore we always have: $\boldsymbol{\pi}_{k+1}^{\widetilde{P}}<\boldsymbol{\pi}_{k}^{\widetilde{P}}$, while $\frac{\boldsymbol{\pi}_{k+1}^{\widetilde{P}}}{\boldsymbol{\pi}_{k}^{\widetilde{P}}}\longrightarrow 1$ as $k\rightarrow \infty$.

As an example consider: $P_{9591}=99,989$, where $\boldsymbol{\pi}_{9591}^{\widetilde{P}}=.000102892$ and  $P_{9592}=99,991$, where $\boldsymbol{\pi}_{9592}^{\widetilde{P}}=.000102882$, giving $\frac{\boldsymbol{\pi}_{9592}^{\widetilde{P}}}{\boldsymbol{\pi}_{9591}^{\widetilde{P}}}=.999902811$.

If we then look at $P_{11176}=118633$ where $\boldsymbol{\pi}_{11176}^{\widetilde{P}}=.0000881401$ and $P_{11177}=118661$ where $\boldsymbol{\pi}_{11177}^{\widetilde{P}}=.0000881322$ giving $\frac{\boldsymbol{\pi}_{11177}^{\widetilde{P}}}{\boldsymbol{\pi}_{11176}^{\widetilde{P}}}=.9999103700$.

$\boldsymbol{\pi}_{\widetilde{P}}^{k}$ appears to asymtotically approach zero as $k\rightarrow \infty$, where:

\[\sum_{i=1}^{k}\ln{P_{i}}>\prod_{i=1}^{k}\left(1-\frac{1}{P_{i}}\right)^{-1}=\prod_{i=1}^{k}\left(\frac{P_{i}}{P_{i}-1}\right)
\]

\begin{table}[h]
	\begin{center}
		\caption{Evaluation of Equation~(\ref{E: fixedratiopri2propri})}
		\renewcommand{\arraystretch}{1.5}	
		\begin{tabular}{|c|c|c|c|c|c|c|c|}
			\hline 
			$k$ & $P_{k}$ & $P_{k+1}$ & $\frac{P_{k+1}}{P_{k+1}-1}$ & $\prod_{i=1}^{k}P_{i}$ & $\alpha_{k}$ & $\frac{\alpha_{k}}{\alpha_{k}+1}$ & $\frac{\boldsymbol{\pi}_{k+1}^{\widetilde{P}}}{\boldsymbol{\pi}_{k}^{\widetilde{P}}}$ \\ 
			\hline 
			2 & 3 & 5 & 1.25 & 6 & 1.11 & .526 & .658 \\ 
			\hline 
			3 & 5 & 7 & 1.16 & 30 & 1.75 & .636 & .742 \\ 
			\hline 
			4 & 7 & 11 & 1.1 & 210 & 2.23 & .690 & .759 \\ 
			\hline 
			5 & 11 & 13 & 1.083 & 2,310 & 3.02 & .751 & .814 \\ 
			\hline 
			6 & 13 & 17 & 1.062 & 30,030 & 3.64 & .784 & .833 \\ 
			\hline 
			7 & 17 & 19 & 1.05 & 510,510 & 4.46 & .816 & .862 \\ 
			\hline 
			8 & 19 & 23 & 1.045 & 9,699,690 & 5.13 & .837 & .875 \\ 
			\hline 
			9 & 23 & 29 & 1.035 & 22,3092,870 & 5.71 & .851 & .881 \\ 
			\hline 
		\end{tabular}\\
		\label{T: P2proPratio} 	
	\end{center}
\end{table}

\subsection{Propagation of Prospective twin primes}

Prospective twin primes, where $\widetilde{P}_{i+1} = \widetilde{P}_{i} + 2$ follow the same propagation concept as prospective prime numbers except they are paired occurrences within the same elemental sequence, appearing as $\widetilde{P_{i}}=\widetilde{P}_{m,1}$ and $\widetilde{P}_{i+1} = \widetilde{P}_{m,3}$. 

When extending $S_{k}$ to $S_{k+1}$ each prospective twin prime $(\widetilde{P}_{m,1},\widetilde{P}_{m,3}) \in S_{k}$, prime relative to $P \le P_{k}$ corresponds to a location of paired prospective prime number locations in the underlying sequence $\psi_{k}$. That sequence is repeated $P_{k+1}$ times in $S_{k+1}$ and each instance retains that paired set of locations corresponding to numbers in $S_{k+1}$ that remain prime to all $P \le P_{k}$.

These repetitions in $S_{k+1}$ occur as:

\begin{equation}\label{E: tppProp1}
	\left(\widetilde{P}_{i}+m_{k}\prod_{j=1}^{k}P_{j} \; , \widetilde{P}_{i+1}+m_{k}\prod_{j=1}^{k}P_{j}\right)  =  \left(\widetilde{P}_{[m+ m_{k}\prod_{j=3}^{k}P_{j} ],1} \; , \widetilde{P}_{[m + m_{k}\prod_{j=3}^{k}P_{j} ],3}\right)
\end{equation}

As with the propagation of prospective prime numbers, the prime factor $P_{k+1}$ occurs in each position of $\psi_{k}$ exactly once throughout its $P_{k+1}$ repetitions in $S_{k+1}$ and it cannot occur twice in the same elemental sequence. Therefore, one instance of prime factor $P_{k+1}$ occurs in two of the repeated locations of each prospective twin prime in $S_{k}$ extended to $S_{k+1}$, corresponding to the two components of the prospective twin primes.  

This leads to the following theorem, using $\widetilde{t}$ to represent a prospective twin prime:

\begin{theorem}\label{T: noprotpp}  Given any prime number, $P_{k} > 2$, let $n_{k}^{\widetilde{t}}$ be the number of prospective twin primes $(\widetilde{P}_{i}, \widetilde{P}_{i+1})$, where $\widetilde{P}_{i+1}=\widetilde{P}_{i}+2$ and  $\widetilde{P}_{i}$ and $\widetilde{P}_{i+1}$ are prime relative to all $P \le P_{k}$, and where $P_{k} < \widetilde{P}_{i}$ and $\widetilde{P}_{i+1} < 4+\prod_{i=1}^{k} P_{k}$. Then defining $n_{1}^{\widetilde{t}}=1$,: $n_{k}^{\widetilde{t}}=(P_{k}-2)n_{k-1}^{\widetilde{t}}=\prod_{i=2}^{k}(P_{i}-2)$.
\end{theorem}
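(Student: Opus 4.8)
The plan is to run the same inductive scheme as in the proof of Theorem~\ref{T: numpropinsk}, but now tracking \emph{pairs} of prospective prime locations. Assume $S_{k}$ contains $n_{k}^{\widetilde{t}}$ prospective twin primes. Since two numbers differing by $2$ that are both prime to $2$ and to $3$ must occupy positions $1$ and $3$ of one and the same elemental sequence, every such pair has the form $(\widetilde{P}_{m,1},\widetilde{P}_{m,3})$, i.e. it is a single pair of locations in $\psi_{k}$. Passing to $S_{k+1}$ replaces $\psi_{k}$ by $P_{k+1}$ consecutive copies of itself, so by Equation~(\ref{E: tppProp1}) each prospective twin prime of $S_{k}$ yields exactly $P_{k+1}$ copies in $S_{k+1}$, indexed by $m_{k}\in\{0,1,\dots,P_{k+1}-1\}$; in every copy the two components still differ by $2$ and are still prime to all $P\le P_{k}$ (inheritance of primality relative to $P\le P_{k}$ being established in Section~\ref{S: PropofProPriNumbers}). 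The content of the theorem is to count how many of these $P_{k+1}$ copies additionally have both components prime to $P_{k+1}$.

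The key step — and the only place where the twin count genuinely diverges from the single prospective-prime count of Theorem~\ref{T: numpropinsk} — is that $P_{k+1}$ now destroys \emph{two} of the $P_{k+1}$ copies rather than one. From the fact proved in Section~\ref{S: PropofProPriNumbers} that $P_{k+1}$ occurs exactly once among the $P_{k+1}$ repetitions of any fixed position of $\psi_{k}$, applying this to the position of $\widetilde{P}_{m,1}$ rules out one value of $m_{k}$, and applying it to the position of $\widetilde{P}_{m,3}$ rules out one (a priori different) value of $m_{k}$. I would then show these two forbidden values are distinct: were $P_{k+1}$ to divide both $\widetilde{P}_{i}+m_{k}\prod_{j=1}^{k}P_{j}$ and $\widetilde{P}_{i+1}+m_{k}\prod_{j=1}^{k}P_{j}$ for a common $m_{k}$, it would divide their difference $\widetilde{P}_{i+1}-\widetilde{P}_{i}=2$, which is impossible since $P_{k+1}\ge 3$. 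Hence exactly $P_{k+1}-2$ of the copies survive, each as a genuine prospective twin prime of $S_{k+1}$; conversely every prospective twin prime of $S_{k+1}$ arises this way, since a pair one of whose components is divisible by some $P\le P_{k}$ cannot sit at a surviving location. This yields the recursion $n_{k+1}^{\widetilde{t}}=(P_{k+1}-2)\,n_{k}^{\widetilde{t}}$; combined with the convention $n_{1}^{\widetilde{t}}=1$ and the direct check $n_{2}^{\widetilde{t}}=1$ for $S_{2}=\{5,6,7,8,9,10\}$ (whose only admissible pair is $(5,7)$), unwinding the recursion gives $n_{k}^{\widetilde{t}}=\prod_{i=2}^{k}(P_{i}-2)$.

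I expect the main obstacle to be making the counting step airtight — namely that $P_{k+1}$ removes exactly two of the copies. One must confirm the two forbidden indices $m_{k}$ always exist (from the once-per-period occurrence of $P_{k+1}$) and are never accidentally equal, which is precisely where $2<P_{k+1}$ enters; and one should also observe that the range restriction $P_{k+1}<\widetilde{P}_{i}$ used in counting $n_{k+1}^{\widetilde{t}}$ is automatically preserved, because the only prospective prime of $S_{k}$ lying in $(P_{k},P_{k+1}]$ is $P_{k+1}$ itself, and any copy containing it is one of the two that $P_{k+1}$ deletes. The remaining bookkeeping is identical to that in the proof of Theorem~\ref{T: numpropinsk}.
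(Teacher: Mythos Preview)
Your proposal is correct and follows essentially the same inductive route as the paper: repeat $\psi_{k}$ a total of $P_{k+1}$ times, observe each prospective twin pair lifts to $P_{k+1}$ copies all still prime to $P\le P_{k}$, then remove the two copies hit by $P_{k+1}$ and verify the base case. The one point where you are slightly sharper than the paper is the distinctness of the two forbidden indices: the paper appeals to the fact that $P_{k}$ cannot appear twice in a single six-number elemental sequence, whereas you argue directly that a common forbidden $m_{k}$ would force $P_{k+1}\mid 2$; your version is cleaner and is exactly the right justification. Your extra remark about the lower range constraint $P_{k+1}<\widetilde{P}_{i}$ is a detail the paper leaves implicit.
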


\begin{proof}
	Consider the sequence of natural numbers $5 \rightarrow 4 + \prod_{i=1}^{k-1}P_{i}$.  This sequence consists of $\prod_{i=1}^{k-1}P_{i}$ sequential natural numbers that includes all combinations of prime factors  $P<P_{k-1}$ situated relative to each other. That sequence of prime factors repeats without end throughout the natural numbers for each successive sequence of $\prod_{i=1}^{k-1}P_{i}$ numbers. Assume this sequence has, $n_{k-1}$ prospective twin primes prime to all prime numbers $P \le P_{k-1}$.  Then consider the extension of this sequence to the number $4+\prod_{i=1}^{k}P_{i}$. This involves $P_{k}$ repetitions of the underlying prime factor sequence associated with the numbers $5 \rightarrow  \prod_{i=1}^{k-1}P_{i}$. Therefore there are $P_{k}n_{k-1}$ prospective twin primes prime to $P<P_{k-1}$ in the extended sequence.  Since the prime factor $P_{k}$ occurs exactly once in each of the $\prod_{i=1}^{k-1}P_{i}$ positions of the underlying  prime factor sequence, each of the twin primes prime to $P \le P_{k-1}$ has the prime factor  $P_{k}$ exactly twice during the $P_{k}$ iterations of the underlying sequence, once for each number in each twin prime,  and the prime factor $P_{k}$ never occurs more than once in the same 6-number elemental sequence. Therefore there are $n_{k}=P_{k}n_{k-1} - 2n_{k-1}=(P_{k}-2)n_{k-1}$ twin primes prime to all $P \le P_{k}$ within the sequence $5 \rightarrow 4 + \prod_{i=1}^{k}P_{i}$. If then we define $n_{1}=1$, to initialize the succession of prime factor sequences, we have: $n_{k}=\prod_{i=2}^{k}(P_{i}-2)=\prod_{i=3}^{k}(P_{i}-2)$. 
	
	Starting with $k=2$ we have the sequence $5, 6, 7, 8, 9, 10$ where $5$ and $7$ are the only twin prime prime to $2$ and $3$ giving $n_{2}^{\widetilde{t}}=1$. Then extending this to include prime factor $5$ we get the sequence $5, 6, \cdots, 34$, where only $ (11, 13), (17, 19),$ and $(29,31)$ are twin primes prime to $2, 3,$ and $5$, giving $n_{3}^{\widetilde{t}}=3$, which  verifies the initial cases of the formula $n_{k}^{\widetilde{t}}=\prod_{i=2}^{k}(P_{i}-2)$ and completes the proof.
\end{proof}

The theorem shows the number of prospective twin primes in 

$S_{k}=\left\{N: 5 \le N \le 4+\prod_{i=1}^{k}P_{i}\right\}$ is given by:

\begin{equation}\label{E: numbertpp_1} 
	n_{k}^{\widetilde{t}} = \prod_{i=2}^{k}(P_{i} - 2)
\end{equation}

The density of prospective twin primes relative to all numbers in $S_{k}$ is:

\begin{equation}\label{E: densityPTPP1}
	\rho_{k}^{\widetilde{t}} =\frac{\prod_{i=2}^{k}(P_{i} - 2)}{\prod_{j=1}^{k}P_{j}}=\frac{1}{2}\prod_{i=2}^{k}\frac{P_{i}-2}{P_{i}} = \frac{1}{2} \prod_{i=2}^{k} \left( 1-\frac{2}{P_{i}} \right)
\end{equation}

The density of prospective twin primes relative to prospective prime numbers in $S_{k}$ is:

\begin{equation}\label{E: densityPTPP2}
	\sigma_{k}^{\widetilde{t}}=\frac{\prod_{i=2}^{k}\left(P_{i}-2\right)}{\prod_{i=2}^{k}\left(P_{i}-1\right)}=\prod_{i=2}^{k}\frac{P_{i}-2}{P_{i}-1}
\end{equation}

\begin{table}
	\begin{center}
		\caption{Numbers ($n_{k}^{\widetilde{t}}$) of prospective twin primes and their associated densities with respects to all numbers ($\rho_{k}^{\widetilde{t}}$) in a set $S_{k}$ and with respects to prospective prime numbers ($\sigma_{k}^{\widetilde{t}}$) in that set.}
		\renewcommand{\arraystretch}{1.5}
		\begin{tabular}{|c|c|c|c|c|c|}
			\hline 
			$k$ & $P_{k}$ & $4+\prod_{i=1}^{k}P_{i}$ & $n_{k}^{\widetilde{t}}$ & $\rho_{k}^{\widetilde{t}}$ & $\sigma_{k}^{\widetilde{t}}$ \\ 
			\hline 
			3 & 5 & 34 & 3 & .1 & .375 \\ 
			\hline 
			4 & 7 & 214 & 15 & .0714 & .313 \\ 
			\hline 
			5 & 11 & 2,314 & 135 & .05834 & .281 \\ 
			\hline 
			6 & 13 & 30,034 & 1,485 & .0495 & .258 \\ 
			\hline 
			7 & 17 & 510,514 & 22,275 & .0436 & .242 \\ 
			\hline 
			8 & 19 & 9,699,694 & 378,675 & .0390 & .228 \\ 
			\hline 
			9 & 23 & 223,092,874 & 7,952,175 & .0356 & .218 \\ 
			\hline 
			10 & 29 & 6,469,693,234 & 214,708,725 & .0332 & .210 \\ 
			\hline 
			11 & 31 & 200,560,490,134 & 6,226,553,035 & .0310 & .203 \\ 
			\hline 
			12 & 37 & 7,420,738,134,814 & 217,929,355,875 & .0294 & .197 \\ 
			\hline 
		\end{tabular} 
		\label{T: primedensities}
	\end{center}
\end{table}

Given that all prospective prime numbers $\widetilde{P} \in S_{k}$ occur in the interval, $P_{k} < \widetilde{P} < 4+\prod_{i=1}^{k}P_{i}$, one might think using that interval would give a more accurate density for prospective prime numbers and prospective twin primes, however:

\[
\widehat{n}_{k}^{\widetilde{p}}=
\frac{\prod_{i=1}^{k}(P_{i}-1)}{\prod_{i=1}^{k}P_{i}-P_{k}}=
\left(\frac{1}{1-\frac{1}{\prod_{i=1}^{k-1}P_{i}}}\right)
\frac{\prod_{i=1}^{k}(P_{i}-1)}{\prod_{i=1}^{k}P_{i}} \approx n_{k}^{\widetilde{p}}
\]

and

\[
\widehat{n}_{k}^{\widetilde{t}}=
\frac{\prod_{i=1}^{k}(P_{i}-2)}{\prod_{i=1}^{k}P_{i}-P_{k}}=
\left(\frac{1}{1-\frac{1}{\prod_{i=1}^{k-1}P_{i}}}\right)
\frac{\prod_{i=1}^{k}(P_{i}-2)}{\prod_{i=1}^{k}P_{i}} \approx n_{k}^{\widetilde{t}}
\]

Given Theorem~\ref{T: noprotpp} we also have the following theorem.

\begin{theorem}
	Given any prime number $P_{k}$, there are an infinite number of prospective twin primes, prime to $P\le P_{k}$
\end{theorem}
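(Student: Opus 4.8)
The plan is to treat this as essentially a corollary of Theorem~\ref{T: noprotpp} together with the periodicity of the prime‑factor sequence $\psi_{k}$ established in Section~\ref{S: defs}. First I would fix the prime $P_{k}$; the case $P_{k}=2$ is trivial (any two consecutive odd numbers differ by $2$ and are prime to $P\le 2$), so I may assume $P_{k}>2$ and invoke Theorem~\ref{T: noprotpp}, which records that $S_{k}$ contains exactly $n_{k}^{\widetilde{t}}=\prod_{i=2}^{k}(P_{i}-2)$ prospective twin primes prime to all $P\le P_{k}$. The one small point that must be made explicit is that this count is never zero: for every $i\ge 2$ we have $P_{i}\ge 3$, hence $P_{i}-2\ge 1$, so $n_{k}^{\widetilde{t}}\ge 1$ and $S_{k}$ contains at least one such pair $(\widetilde{P}_{m,1},\widetilde{P}_{m,3})$ with $\widetilde{P}_{m,3}=\widetilde{P}_{m,1}+2$.

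Next I would apply the divisibility periodicity already proved: for any natural number $N$, any integer $j\ge 0$, and any $P\le P_{k}$, one has $P\mid N$ if and only if $P\mid\bigl(N+j\prod_{i=1}^{k}P_{i}\bigr)$. Applying this to each coordinate of a prospective twin prime $(\widetilde{P}_{m,1},\widetilde{P}_{m,3})\in S_{k}$ shows that for every $j\ge 0$ the shifted pair $\bigl(\widetilde{P}_{m,1}+j\prod_{i=1}^{k}P_{i},\ \widetilde{P}_{m,3}+j\prod_{i=1}^{k}P_{i}\bigr)$ again consists of two numbers differing by $2$, each prime to all $P\le P_{k}$; that is, it is again a prospective twin prime prime to $P\le P_{k}$. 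This is exactly the propagation already recorded in Equation~(\ref{E: tppProp1}), specialized to a single prime factor level being repeated indefinitely rather than just $P_{k+1}$ times.

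Finally I would observe that these shifted pairs are pairwise distinct, since for distinct $j$ they lie in the disjoint blocks $S_{k}+j\prod_{i=1}^{k}P_{i}$, each of length $\prod_{i=1}^{k}P_{i}$. As $j$ ranges over all of $\{0,1,2,\dots\}$ and we have at least one base pair from $S_{k}$, this produces infinitely many prospective twin primes prime to $P\le P_{k}$, completing the proof. I do not anticipate a real obstacle here; the only step needing care is the non‑vanishing of $n_{k}^{\widetilde{t}}$ (equivalently $P_{i}-2\ge 1$ for all $i\ge 2$), because without it the "infinitely many translates of one base pair" argument would have nothing to propagate. An equivalent route would be to use the nested inclusions $S_{k}\subset S_{l}$ for $l>k$ and let $l\to\infty$, but the direct translation argument is cleaner and keeps the dependence on Theorem~\ref{T: noprotpp} transparent.
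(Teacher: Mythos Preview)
Your proposal is correct and follows essentially the same argument as the paper: invoke Theorem~\ref{T: noprotpp} to obtain at least one prospective twin prime in $S_{k}$, then translate it by all non-negative multiples of $\prod_{i=1}^{k}P_{i}$ using the periodicity of $\psi_{k}$ to produce infinitely many. Your version is slightly more careful in making explicit the non-vanishing of $n_{k}^{\widetilde{t}}$ and the pairwise distinctness of the translates, but the route is the same.
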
  

\begin{proof}
	Given Theorem~\ref{T: noprotpp}, there are $n_{k}^{\widetilde{t}}=\prod_{i=3}^{k}(P_{i}-2)$ prospective twin primes relative to prime factors $P \le P_{k}$ in the sequence $5 \rightarrow 4 + \prod_{i=1}^{k}P_{i}$. Since this sequence of natural numbers includes all possible combinations of prime factors $P \le P_{k}$ situated relative to each other, the underlying sequence of prime factors $P \le P_{k}$ repeats without end throughout the natural numbers. 
	
	Then if $(\widetilde{P}_{i},\widetilde{P}_{i+1})\in S_{k}$ is such a prospective twin prime relative to all $P \le P_{k}$, then so is: 
	
	\[
	\left( \widetilde{P}_{i}+ n\prod_{i=1}^{k}P_{i} \;, \widetilde{P}_{i+1} + n\prod_{i=1}^{k}P_{i}\right)
	\]
	
	where $n$ is any positive integer.
	
	Therefore given that $n_{k}^{\widetilde{t}}$ is not zero for any $P_{k}$, there are an infinite number of prospective twin primes relative to $P \le P_{k}$.
\end{proof}

\subsection{Systematic Generation of Prospective Prime Numbers and Prospective twin primes}

The initial elemental 6-number sequence contains prime numbers $N_{0,1}=5$ and $N_{0,3}=7$. Then numbers that remain prime to $2$ and $3$ occur at:

\begin{equation}\label{E: sysmethod 1}
	N_{0,1}+m\prod_{i=1}^{2}P_{i} \quad \textrm{and} \quad  N_{0,3}+m\prod_{i=1}^{2}P_{i}
\end{equation}
where $m$ has any integer value.

Considering the succession of sets $S_{2} \rightarrow S_{3} \rightarrow S_{3} \rightarrow \cdots \rightarrow S_{k}$, we can write Equation~(\ref{E: sysmethod 1}) as:\\

For $N_{n,1}\; \& \; N_{n,3} \in S_{k}$:

\begin{align}\label{E: proprime1a}\notag
	\left(\begin{array}{c} N_{n_{1},1}\\ N_{n_{3},3}\end{array}\right)_{S_{k}}
	&=  
	\left(\begin{array}{c} N_{0,1}\\ N_{0,3}\end{array}\right)
	+\left(\begin{array}{c} m_{3}^{(1)}\\ m_{3}^{(3)}\end{array}\right)\prod_{i=1}^{2}P_{i}+\left(\begin{array}{c} m_{4}^{(1)}\\ m_{4}^{(3)}\end{array}\right)\prod_{i=1}^{3}P_{i}+\cdots \\ \notag
	&+ \sum_{j=3}^{k}\left(\begin{array}{c} m_{j}^{(1)}\\ m_{j}^{(3)}\end{array}\right)\prod_{i=1}^{j-1}P_{i} \\
	\left(\begin{array}{c} N_{n_{1},1}\\ N_{n_{3},3}\end{array}\right)_{S_{k}}	&=
	\left(\begin{array}{c} N_{0,1}\\ N_{0,3}\end{array}\right)+\sum_{j=3}^{k}\left(\begin{array}{c} m_{j}^{(1)}\\ m_{j}^{(3)}\end{array}\right)\prod_{i=1}^{j-1}P_{i}
\end{align}
Where the upper numbers go together and the lower numbers go together representing two separate equations.

Also: $0\le m_{j} \le P_{j}-1$ at each stage and:

\begin{equation}\label{E: proprime1c}\notag
	n_{1}=\sum_{j=3}^{k} m_{j}^{(1)}\prod_{i=3}^{j-1}P_{i} \quad
	\textrm{and} \quad
	n_{3}=\sum_{j=3}^{k} m_{j}^{(3)}\prod_{i=3}^{j-1}P_{i}
\end{equation}

Numbers prime to $P \le P_{k}$ in $S_{k}$ are produced using this equation by including terms up to and including $m_{k}$, while noting that one value of $m_{j}$ in each successive stage of the summation must be omitted separately for each of $N_{n_{1},1}$ and $N_{n_{3},3}$ because those cases produce numbers divisible by $P_{j}$. The reason only one value is omitted for each set of $m_{j}$ is because when extending $S_{j-1}$ to $S_{j}$ the underlying sequence $\psi_{j-1}$ is repeated $P_{j}$ times in $\psi_{j}$ and the prime factor $P_{j}$ occurs exactly once in each position of $\psi_{j-1}$ spread throughout it's $P_{j}$ repetitions in $\psi_{j}$. Therefore exactly one instance of $N_{n_{1},1}$ and $N_{n_{3},3}$ for each $m_{j}$ will have prime factor $P_{j}$ and those correspond to different values of $m_{j}$ in the case of $N_{n_{1},1}$ and $N_{n_{3},3}$, because the prime factor $P_{j}>6$ cannot occur twice in the same elemental sequence.

Therefore the supplemental condition for Equations~(\ref{E: proprime1a})  is that for each successive stage of summation:

\begin{equation}\label{E: supplcondition}
	P_{l} \nmid \left[ \left(\begin{array}{c} N_{0,1}\\ N_{0,3}\end{array}\right)+\sum_{j=3}^{l}\left(\begin{array}{c} m_{j}^{(1)}\\ m_{j}^{(3)}\end{array}\right)\prod_{i=1}^{j-1}P_{i} \right]
\end{equation}

This condition can also be stated as:

\[
\left[ \left(\begin{array}{c} N_{0,1}\\ N_{0,3}\end{array}\right)+\sum_{j=3}^{l}\left(\begin{array}{c} m_{j}^{(1)}\\ m_{j}^{(3)}\end{array}\right)\prod_{i=1}^{j-1}P_{i} \right]\bmod{P_{l}}\ne 0
\]

Starting with prospective prime numbers in $S_{k}$ and extending them to $S_{k+1}$, using all values of $m_{k+1}$ produces all numbers in $S_{k+1}$ that remain prime to $P \le P_{k}$. Elimiating the one value of $m_{k+1}$ for each $\widetilde{P}\in S_{k}$ then gives the prospective prime numbers in $S_{k+1}$ that are prime to $P \le P_{k+1}$.

Numbers in $S_{k}$ that have a prime factor from among $P \le P_{k}$ cannot be extended into $S_{k+1}$ as a prospective prime number because, given $N \in S_{k}$, if $P | N$ then $P | \left(N+m_{k+1}\prod_{i=1}^{k}P_{i}\right)$ for all values of $m_{k+1}$. That is why the existence of prospective prime numbers $\widetilde{P}>P_{k}$ in $S_{k+1}$ depend on extension from prospective prime numbers in $S_{k}$ and actual prime numbers $P>P_{k}$ can only occur at locations of prospective prime numbers and in this respect they also depend on extention from the prospective prime numbers in $S_{k}$. 

Calculation of prospective prime numbers is best done iteratively, formulating Equation~(\ref{E: proprime1a}) for $S_{k}\longrightarrow S_{k+1}$ as:

\begin{equation}\label{E: iteritiveeq}
	\left(\begin{array}{c}\widetilde{P}_{n_{1},1}\\\widetilde{P}_{n_{3},3}\end{array} \right)_{S_{k+1}}=\left(\begin{array}{c}\widetilde{P}_{n'_{1},1}\\\widetilde{P}_{n'_{3},3}\end{array} \right)_{S_{k}}+\left(\begin{array}{c} m_{k+1}^{(1)}\\m_{k+1}^{(3)}\end{array}\right)\prod_{i=1}^{k}P_{i}
\end{equation}
where $0 \le m_{k+1}^{(1)},m_{k+1}^{(3)} \le P_{k+1}-1$ and the condition from equation~(\ref{E: supplcondition}) gives: $m_{k+1}^{(1)}\ne \widehat{m}_{k+1}^{(1)}$ and $m_{k+1}^{(3)}\ne \widehat{m}_{k+1}^{(3)}$, where $\widehat{m}_{k+1}^{(1)}$ and $\widehat{m}_{k+1}^{(3)}$ are disallowed values (since they give results divisible by $P_{k+1}$) given by:

\begin{align}\label{E: mhat}\notag
	\widehat{m}_{k+1}^{(1)}=&\frac{\alpha_{1}P_{k+1}-\widetilde{P}_{n'_{1},1}\bmod{P_{k+1}}}{\left(\prod_{i=1}^{k}P_{i}\right)\bmod{P_{k+1}}}\\	
	\textrm{and}\\\notag
	\widehat{m}_{k+1}^{(3)}=&\frac{\alpha_{3}P_{k+1}-\widetilde{P}_{n'_{3},3}\bmod{P_{k+1}}}{\left(\prod_{i=1}^{k}P_{i}\right)\bmod{P_{k+1}}}	
\end{align}
where $\alpha_{1}$ and $\alpha_{3}$ are the smallest integers making $\widehat{m}_{k+1}^{(1)}$ and $\widehat{m}_{k+1}^{(3)}$ integers; and $\alpha_{1}=0$ if $\widetilde{P}_{n'_{1},1}\bmod{P_{k+1}}=0$ and $\alpha_{3}=0$ if $\widetilde{P}_{n'_{3},1}\bmod{P_{k+1}}=0$.

Note that using the maximum value of $m_{j}=P_{j}-1$ at each stage in Equations~(\ref{E: proprime1a}) gives:

\begin{align}\label{E: maxmj-1}\notag
	\left(\begin{array}{c} N_{n_{1},1}\\ N_{n_{3},3}\end{array}\right)_{S_{k}}
	&=  
	\left(\begin{array}{c} N_{0,1}\\ N_{0,3}\end{array}\right)+\sum_{j=3}^{k}(P_{j}-1)\prod_{i=1}^{j-1}P_{i}\\\notag
	&=\left(\begin{array}{c} N_{0,1}\\ N_{0,3}\end{array}\right)+
	\sum_{j=3}^{k}\prod_{i=1}^{j}P_{i}-\sum_{j=3}^{k}\prod_{i=1}^{j-1}P_{i}\\\notag
	&=\left(\begin{array}{c} 5\\ 7 \end{array}\right)+\prod_{i=1}^{k}P_{i}-6\\
	&=\prod_{i=1}^{k}P_{i} + \left(\begin{array}{c} -1\\ +1 \end{array}\right)
\end{align}

$\prod_{i=1}^{k}P_{i}-1$ and $\prod_{i=1}^{k}P_{i}+1$ are prime to $P \le P_{k}$. they are therefore largest prospective prime numbers and prospective twin prime in $S_{k}$.\footnote{However, they are not alway an actual prime numbers, e.g., $\prod_{i=1}^{4}P_{i}-1=209=11\cdot 19$; and $\prod_{i=1}^{6}P_{i}+1=30031=59\cdot 509$.} Therefore using the maximum value of $m_{j}$ at every stage is always a valid choice since it always satisfies the supplemental condition (\ref{E: supplcondition}). However, the maximum value of $m_{j}$ may not be valid if a value less than the maximum value was used at a prior stage. 

In extending a prospective prime number $\widetilde{P} \in S_{k}$ to $S_{k+1}$, the choice of $m_{j}=0$ is valid unless $\widetilde{P}$ itself contains the prime factor $P_{k+1}$.

We can now state the following theorem:

\begin{theorem}
	Given $S_{k}=\left\{N: 5 \le N \le 4 + \prod_{i=1}^{k}P_{i} \right\}$, then let $\widetilde{P}_{n,1}$ and $\widetilde{P}_{n',3}$ represent numbers in $S_{k}$ from the arithmetic progressions $6n+5$ and $6n'+7$ respectively ($n \ge 0$) that are also prime to all $P \le P_{k}$. Then there are equal numbers of $\widetilde{P}_{n,1}$ and $\widetilde{P}_{n',3}$ in $S_{k}$.
\end{theorem}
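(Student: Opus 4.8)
The plan is to translate the statement into a count of residue classes modulo $\Pi:=\prod_{i=1}^{k}P_i$ and then pair the two progressions off by an explicit involution. First observe that $S_k$ is a block of exactly $\Pi$ consecutive integers, hence a complete residue system modulo $\Pi$; and a number $N\in S_k$ is a prospective prime (prime to every $P\le P_k$) exactly when $\gcd(N,\Pi)=1$. Since $k\ge 2$ forces $6\mid\Pi$, every such $N$ is coprime to $6$, so $N\equiv 1$ or $N\equiv 5\pmod 6$; these two alternatives are precisely membership in the progressions $6n'+7$ and $6n+5$. Thus the theorem is equivalent to the assertion that, among the $\varphi(\Pi)$ residues in $(\mathbb{Z}/\Pi\mathbb{Z})^{\times}$, the number congruent to $5$ mod $6$ equals the number congruent to $1$ mod $6$.

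The key step is the map $r\mapsto -r$ on $(\mathbb{Z}/\Pi\mathbb{Z})^{\times}$. It is a bijection of the unit group onto itself (since $\gcd(-r,\Pi)=\gcd(r,\Pi)$), and because $6\mid\Pi$ it is compatible with reduction mod $6$, where it acts as negation; hence $r\equiv 5\pmod 6$ if and only if $-r\equiv 1\pmod 6$. So negation sends the set of units that are $\equiv 5\pmod 6$ into the set of units that are $\equiv 1\pmod 6$ and conversely, and being an involution it is a bijection between these two sets. Therefore the two sets have equal size, which is exactly the desired conclusion; by Theorem~\ref{T: numpropinsk} each of the two counts is $\tfrac12\prod_{i=2}^{k}(P_i-1)$.

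An alternative proof staying closer to the paper's generation machinery is induction on $k$. The base case $S_2=\{5,\dots,10\}$ has exactly one progression-$1$ prospective prime ($5$) and exactly one in progression $2$ ($7$). For the inductive step, recall from Equation~\eqref{E: iteritiveeq} and the surrounding discussion that every prospective prime of $S_{k+1}$ has the form $\widetilde P+m\prod_{i=1}^{k}P_i$ for a unique prospective prime $\widetilde P\in S_k$ and a unique admissible $m\in\{0,\dots,P_{k+1}-1\}$ (exactly one value of $m$ being disallowed for each $\widetilde P$). Because $\prod_{i=1}^{k}P_i\equiv 0\pmod 6$ when $k\ge 2$, this extension does not change the residue modulo $6$, so it carries progression-$1$ prospective primes of $S_k$ to progression-$1$ prospective primes of $S_{k+1}$ (and likewise for progression $2$), each $\widetilde P$ producing $P_{k+1}-1$ offspring. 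Hence if $a_k$ and $b_k$ denote the two counts in $S_k$ then $a_{k+1}=(P_{k+1}-1)a_k$ and $b_{k+1}=(P_{k+1}-1)b_k$, and $a_2=b_2=1$ propagates $a_k=b_k$ to all $k$.

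I expect essentially no serious obstacle: the whole content sits in the observation that $S_k$ is exactly one full period of the prime-factor pattern, after which either the negation involution or the progression-preserving generation step finishes the argument immediately. The only points needing care are the small-$k$ bookkeeping (for instance, $5$ is a prospective prime of $S_2$ but not of $S_k$ for $k\ge 3$, since then $5\mid\Pi$) and making explicit the standing assumption $k\ge 2$, equivalently $P_k>2$, which is exactly what guarantees $6\mid\Pi$ and hence that reduction mod $6$ is well defined on residues mod $\Pi$.
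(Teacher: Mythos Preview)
Your proposal is correct. Your second (inductive) argument is essentially the paper's own proof: the paper invokes Equation~(\ref{E: proprime1a}) together with the supplemental condition~(\ref{E: supplcondition}) and observes that at each summation stage exactly one value of $m_j$ is disallowed for each of $N_{n,1}$ and $N_{n',3}$, so the two progressions keep equal counts. You have simply rewritten this as an explicit induction and made the crucial point---that adding a multiple of $\prod_{i=1}^{k}P_i$ preserves the residue modulo $6$---visible, which the paper leaves implicit.

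Your first argument via the involution $r\mapsto -r$ on $(\mathbb{Z}/\Pi\mathbb{Z})^{\times}$ is a genuinely different route. It sidesteps the generation machinery entirely: once one observes that $S_k$ is a complete residue system mod $\Pi$ and that $6\mid\Pi$, the bijection between the two residue classes mod $6$ is immediate. This is shorter and conceptually cleaner than the paper's stage-by-stage count, and it generalises at once (the same argument gives equidistribution of units in $(\mathbb{Z}/\Pi\mathbb{Z})^{\times}$ over the two invertible residues mod any divisor $d$ of $\Pi$ with $\varphi(d)=2$). What the paper's approach buys, on the other hand, is that it stays inside the iterative framework already set up, so no new idea has to be introduced; it also makes the common value $\tfrac12\prod_{i=2}^{k}(P_i-1)$ come out of the construction rather than being read off afterwards.
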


\begin{proof}
	This follows from Equation~(\ref{E: proprime1a}), because in each summation term there are equal numbers of valid $m_{j}$ for both $N_{n,1}$ and $N_{n',3}$. This is true even when considering the supplemental condition Equation~(\ref{E: supplcondition}) which eliminates one value of $m_{j}$ for each of  $N_{n,1}$ and $N_{n',3}$ at each summation stage. Equation~(\ref{E: proprime1a}) together with its supplemental condition yields numbers in $S_{k}$ that are prime to all $P \le P_{k}$.
\end{proof}

The generation of prospective twin primes follows the same scheme except at each stage we must have:

\[
m_{j}^{(1)}=m_{j}^{(3)}=m_{j}
\]
where both $m_{j} \ne \widehat{m}_{j}^{(1)}$ and  $m_{j}\ne \widehat{m}_{j}^{(3)}$ as defined by Equation~(\ref{E: mhat}).

Therefore for prospective twin primes, Equation~(\ref{E: proprime1a}) becomes:

\begin{equation}\label{E: proprime1a3}
	\left(\begin{array}{c} N_{n,1}\\ N_{n,3}\end{array}\right)_{S_{k}}
	=
	\left(\begin{array}{c} N_{0,1}\\ N_{0,3}\end{array}\right)+\sum_{j=3}^{k}(m_{j})\prod_{i=1}^{j-1}P_{i}
\end{equation}
which we can abbreviate as:

\begin{equation}\label{E: proprime1a2}
	\widetilde{t}_{k}
	=
	\left(\begin{array}{c} N_{0,1}\\ N_{0,3}\end{array}\right)+\sum_{j=3}^{k}(m_{j})\prod_{i=1}^{j-1}P_{i}
\end{equation}

And iterative Equation~(\ref{E: iteritiveeq}) becomes:

\begin{equation}\label{E: iteritiveeqt}
	\widetilde{t}_{k+1}=\widetilde{t}_{k}+(m_{k+1})\prod_{i=1}^{k}P_{i}
\end{equation}

The parenthesis around coefficient $m$ represents that $m$ has a range of values.$\\$

As prospective prime numbers in set $S_{k}$ are used to generate prospective prime numbers in $S_{k+1}$, so are prospective twin primes in $S_{k}$ used to generate prospective twin primes in $S_{k+1}$. An individual prospective prime number, not part of a prospective twin prime, in $S_{k}$ cannot be used to generate prospective twin prime, because its other component necessarily has a prime factor $P \le P_{k}$ and cannot be used to generate larger prospective twin primes.

The next two subsections give simple numerical examples of calculating prospective prime numbers and twin primes.

$\\$
\underline{Numerical Example for Numbers Prime to $P \le P_{3}=5$}

Starting with the prospective prime numbers (actual primes in this case) in the first elemental sequence: $N_{0,1}=5$ and $N_{0,3}=7$, which are prime to $2$ and $3$, then in order to calculate other prospective prime numbers, prime to $2$, $3$, and $5$, we have:

\begin{equation}\label{E: proprime2a}
	N_{n,1}=N_{0,1}+\sum_{j=3}^{3}m_{j}\prod_{i=1}^{j-1}P_{i}= 5+m_{3}\cdot6 
\end{equation}
and 
\begin{equation}\label{E: proprime2b}
	N_{n,3}=N_{0,3} +\sum_{j=3}^{3}m_{j}\prod_{i=1}^{j-1}P_{i}= 7+m_{3}\cdot6  
\end{equation}

Then using Equations~(\ref{E: mhat}) find the disallowed values of $m_{j}$:

\[
\widehat{m}_{j}^{(1)}=\frac{\alpha_{1}\cdot 5 - 5\bmod{5}}{6\bmod{5}}=0
\]
\[
\widehat{m}_{j}^{(3)}=\frac{\alpha_{3}\cdot 5 - 7\bmod{5}}{6\bmod{5}}=3
\]

We then get:

\begin{equation}\label{E: proprime3a}
	\left( 
	\begin{array}{c}
		N_{0,1} \\ N_{1,1} \\ N_{2,1} \\ N_{3,1} \\ N_{4,1}
	\end{array}
	\right)
	=
	5+
	\left( 
	\begin{array}{c}
		- \\ 1 \\ 2 \\ 3 \\ 4
	\end{array}
	\right)
	\cdot 6 =
	\left( 
	\begin{array}{c}
		- \\ 11 \\ 17 \\ 23 \\ 29
	\end{array}
	\right)
\end{equation}
and 
\begin{equation}\label{E: proprime3b}
	\left( 
	\begin{array}{c}
		N_{0,3} \\ N_{1,3} \\ N_{2,3} \\ N_{3,3} \\ N_{4,3}
	\end{array}
	\right)
	=
	7+
	\left( 
	\begin{array}{c}
		0 \\ 1 \\ 2 \\-\\ 4
	\end{array}
	\right)
	\cdot 6 =
	\left( 
	\begin{array}{c}
		7 \\ 13 \\ 19 \\ - \\ 31
	\end{array}
	\right)
\end{equation}

Note also that the number pairs, with same value of $n$, are twin primes relative to $P \le 5$ and in this instance are actual twin primes: 

\[
\left(N_{1,1} , N_{1,3} \right)
\]
\[
\left(N_{2,1}, N_{2,3} \right)
\]
\[
\left(N_{4,1},  N_{4,3}\right)
\]
and the following number pairs continue as prospective twin primes, prime to $P \le 5$, for all values of $m_{j} \ge 0$:

\[
\left(N_{1,1}+\sum_{j=4}^{k}m_{j}\prod_{i=1}^{j-1}P_{i} \quad, \quad N_{1,3}+\sum_{j=4}^{k}m_{j}\prod_{i=1}^{j-1}P_{i} \right);
\]
\[
\left(N_{2,1}+\sum_{j=4}^{k}m_{j}\prod_{i=1}^{j-1}P_{i} \quad, \quad N_{2,3}+\sum_{j=4}^{k}m_{j}\prod_{i=1}^{j-1}P_{i} \right);
\]
\[
\left(N_{4,1}+\sum_{j=4}^{k}m_{j}\prod_{i=1}^{j-1}P_{i} \quad, \quad N_{4,3}+\sum_{j=4}^{k}m_{j}\prod_{i=1}^{j-1}P_{i} \right);
\]

$\\$
\underline{Numerical Example for Numbers prime to $P \le P_{4}=7$}

\begin{equation}\label{E: proprime6a}
	(N_{n_{1},1}) =\left(\begin{array}{c}
		N_{1,1} \\ N_{2,1} \\ N_{3,1} \\ N_{4,1}
	\end{array} \right)
	+(m_{4})\prod_{i=1}^{3}P_{i}= \left(\begin{array}{c}
		11 \\17 \\ 23 \\ 29
	\end{array} \right)
	+(m_{4})\cdot 30 
\end{equation}
and
\begin{equation}\label{E: proprime6b}
	(N_{n_{3},3}) = \left( 
	\begin{array}{c}
		N_{0,3} \\ N_{1,3} \\ N_{2,3} \\ N_{4,3}
	\end{array}
	\right)+(m_{4})\prod_{i=1}^{3}P_{i}=\left(\begin{array}{c}
		7 \\ 13 \\ 19 \\ 31
	\end{array}\right)
	+(m_{4})\cdot 30
\end{equation}
Here we use the parenthesis around $m_{4}$ because it is an array of numbers which vary for each $N_{n'_{1},1}$ and $N_{n'_{3},3}$.

Next we find the disallowed values of $m_{4}$ using Equations~(\ref{E: mhat}):

\begin{align}\notag
	\widehat{m}_{4}^{(1)}(11)&=\frac{\alpha_{1}\cdot 7 - 11\bmod{7}}{30\bmod{7}}=\frac{\alpha_{1}\cdot 7 - 4}{2}=5; \; \alpha_{1}=2 \\\notag
	\widehat{m}_{4}^{(1)}(17)&=\frac{\alpha_{1}\cdot 7 - 17\bmod{7}}{30\bmod{7}}=\frac{\alpha_{1}\cdot 7 - 3}{2}=2; \; \alpha_{1}=1 \\\notag
	\widehat{m}_{4}^{(1)}(23)&=\frac{\alpha_{1}\cdot 7 - 23\bmod{7}}{30\bmod{7}}=\frac{\alpha_{1}\cdot 7 - 2}{2}=6; \; \alpha_{1}=2 \\\notag
	\widehat{m}_{4}^{(1)}(29)&=\frac{\alpha_{1}\cdot 7 - 29\bmod{7}}{30\bmod{7}}=\frac{\alpha_{1}\cdot 7 - 1}{2}=3; \; \alpha_{1}=1 \\\notag
	\textrm{and} \\\notag
	\widehat{m}_{4}^{(3)}(7)&=\frac{\alpha_{3}\cdot 7 - 7\bmod{7}}{30\bmod{7}}=\frac{\alpha_{3}\cdot 7 - 0}{2}=0; \; \alpha_{1}=0 \\\notag
	\widehat{m}_{4}^{(3)}(13)&=\frac{\alpha_{3}\cdot 7 - 13\bmod{7}}{30\bmod{7}}=\frac{\alpha_{3}\cdot 7 - 6}{2}=4; \; \alpha_{1}=2 \\\notag
	\widehat{m}_{4}^{(3)}(19)&=\frac{\alpha_{3}\cdot 7 - 19\bmod{7}}{30\bmod{7}}=\frac{\alpha_{3}\cdot 7 - 5}{2}=1; \; \alpha_{1}=1 \\\notag
	\widehat{m}_{4}^{(3)}(31)&=\frac{\alpha_{3}\cdot 7 - 31\bmod{7}}{30\bmod{7}}=\frac{\alpha_{3}\cdot 7 - 3}{2}=2; \; \alpha_{1}=1 \\\notag
\end{align}

The allowed values of $m$ are then:

\begin{equation}\label{E: proprime11a}
	\left(m_{4}^{(1)}\right) = 
	\left(\begin{array}{ccccccc}
		0 & 1 & 2 & 3 & 4 & - & 6 \\
		0 & 1 & - & 3 & 4 & 5 & 6  \\
		0 & 1 & 2 & 3 & 4 & 5 & -  \\
		0 & 1 & 2 & - & 4 & 5 & 6  \\
	\end{array}\right)
\end{equation}

and in the equation for $N_{n,3}$:

\begin{equation}\label{E: proprime11b}
	\left(m_{4}^{(3)} \right)= 
	\left(\begin{array}{ccccccc}
		- & 1 & 2 & 3 & 4 & 5 & 6 \\
		0 & 1 & 2 & 3 & - & 5 & 6  \\
		0 & - & 2 & 3 & 4 & 5 & 6  \\
		0 & 1 & - & 3 & 4 & 5 & 6  \\
	\end{array}\right)
\end{equation}

Using these results in Equations~\ref{E: proprime6a} and \ref{E: proprime6b} gives:

\begin{equation}\label{E: proprime12a}
	(N_{n_{1},1}) =
	\left(\begin{array}{c}
		11 \\17 \\ 23 \\ 29
	\end{array} \right)
	+\left(\begin{array}{ccccccc}
		0 & 1 & 2 & 3 & 4 & - & 6 \\
		0 & 1 & - & 3 & 4 & 5 & 6  \\
		0 & 1 & 2 & 3 & 4 & 5 & -  \\
		0 & 1 & 2 & - & 4 & 5 & 6  \\
	\end{array}\right)
	\cdot 30  
\end{equation}
giving:
\begin{equation}\label{E: proprime13a}
	\left(N_{n_{1},1}\right) =
	\left(\begin{array}{ccccccc}
		11 & 41 & 71 & 101 & 131 & - & 191 \\
		17 & 47 & - & 107 & 137 & 167 & 197 \\
		23 & 53 & 83 & 113 & 143^* & 173 & -  \\
		29 & 59 & 89 & - & 149 & 179 &  209^* \\
	\end{array}\right)
\end{equation}

and

\begin{equation}\label{E: proprime12b}
	\left(N_{n_{3},3}\right) = \left( 
	\begin{array}{c}
		7 \\ 13 \\ 19 \\ 31
	\end{array}
	\right)+\left(\begin{array}{ccccccc}
		- & 1 & 2 & 3 & 4 & 5 & 6  \\
		0 & 1 & 2 & 3 & - & 5 & 6  \\
		0 & - & 2 & 3 & 4 & 5 & 6  \\
		0 & 1 & - & 3 & 4 & 5 & 6  \\
	\end{array}\right)
	\cdot 30
\end{equation}
giving:
\begin{equation}\label{E: proprime13b}
	\left(N_{n_{3},3}\right) =
	\left(\begin{array}{ccccccc}
		- & 37 & 67 & 97 & 127 & 157 & 187^* \\
		13 & 43 & 73 & 103 & - & 163 & 193  \\
		19 & - & 79 & 109 & 139 & 169^* & 199  \\
		31 & 61 & - & 121^* & 151 & 181 &  211 \\
	\end{array}\right)
\end{equation}

Where numbers with an asterix are prime to $p \le 7$, but are not prime numbers, having multiple prime factors $>7$. Note that the two array solutions for $N_{n_{1},1}$ and $N_{n_{3},3}$ contain every prospective prime number $P$ (i.e., prime to all $P \le 7$), where $7 < P < 4+\prod_{i=1}^{4}P_{i}=214 \;$.

These results can be rearranged as follows correspond to the next stage $S_{4}\longrightarrow S_{5}$:

\begin{equation}\label{E: proprime14a}
	\left(N_{n,1}\right)=
	\left(\begin{array}{c}
		N_{1,1} \\N_{2,1}\\N_{3,1} \\N_{4,1}\\N_{6,1}\\N_{7,1}\\N_{8,1}\\N_{9,1}\\N_{11,1}\\N_{13,1}\\N_{14,1}\\N_{16,1}\\N_{17,1}\\N_{18,1}\\N_{21,1}\\N_{22,1}\\N_{23,1}^*\\N_{24,1}\\N_{27,1}\\N_{28,1}\\N_{29,1}\\N_{31,1}\\N_{32,1}\\N_{34,1} 
	\end{array}\right)=
	\left(\begin{array}{c}
		11 \\17\\23 \\29\\41\\47\\53\\59\\71\\83\\89\\101\\107\\113\\131\\137\\143^*\\149\\167\\173\\179\\191\\197\\209^*
	\end{array}\right)
	\quad \textrm{and} \quad
	\left(N_{n,3}\right)=
	\left(\begin{array}{c}
		N_{1,3} \\N_{2,3}\\N_{4,3} \\N_{5,3}\\N_{6,3}\\N_{9,3}\\N_{10,3}\\N_{11,3}\\N_{12,3}\\N_{15,3}\\N_{16,3}\\N_{17,3}\\N_{19,3}\\N_{20,3}\\N_{22,3}\\N_{24,3}\\N_{25,3}^*\\N_{26,3}\\N_{27,3}\\N_{29,3}\\N_{30,3}\\N_{31,3}\\N_{32,3}\\N_{34,3} 
	\end{array}\right)=
	\left(\begin{array}{c}
		13\\19\\31\\37\\43\\61\\67\\73\\79\\97\\103\\109\\121^*\\127\\139\\151\\157\\163\\169^*\\181\\187^*\\193\\199\\211

	\end{array}\right)
\end{equation}\\

These give the following 15 prospective twin primes:

\[
(N_{1,1},N_{1,3}); (N_{2,1},N_{2,3}); (N_{4,1},N_{4,3}); (N_{6,1},N_{6,3}); (N_{9,1},N_{9,3}); (N_{11,1},N_{11,3});  
\]
\[
(N_{16,1},N_{16,3}); (N_{17,1},N_{17,3}); (N_{22,1},N_{22,3}); (N_{24,1},N_{24,3}); (N_{27,1},N_{27,3}); (N_{29,1},N_{29,3}); 
\]
\[
(N_{31,1},N_{31,3}); (N_{321,1},N_{32,3}); (N_{34,1},N_{34,3})
\]

In proceeding to the next stage $S_{4}\longrightarrow S_{5}$, one must keep the prospective prime numbers with asterix, which are not actual prime numbers, since they also generate prospective prime numbers. 

Prospective prime numbers in $S_{5}$ like $121=11^2$ and $143=11 \cdot 13$ which contain prime factor $11$ generate prospective prime numbers in $S_{5}$ prime to $P \le 11$, as long as one excludes $m_{5}=0$, ($\widehat{m}_{5}=0$).

This leads to a general rule: starting with prospective prime numbers in $S_{k}$ to generate prospective prime numbers in $S_{k+1}$. Any prospective prime number in $S_{k}$ that has prime factor $P_{k+1}$ generates a prospective prime number in $S_{k+1}$ using any value of $m_{k+1}$ except $m_{k+1}=0$. Therefore given any $\widetilde{P} \in S_{k}$, where $P_{k+1} | \widetilde{P}$ one can generate a subset of the prospective prime numbers in $S_{k+1}$ using:

\[
\widetilde{P}+\left( \begin{array}{c} 1\\2\\3\\4\\ \cdots \\P_{k+1}-1 \end{array} \right)\cdot \prod_{i=1}^{k}P_{i}
\]

We have shown above that in generating prospective prime numbers one can sort out the prospective twin primes. However, one can generate all prospective twin primes directly using Equation~(\ref{E: proprime1a2}) or Equation~(\ref{E: iteritiveeqt}).

\[
\widetilde{t}_{3}=\left(\begin{array}{c} 5\\7 \end{array}\right)+(m_{j})\prod_{i=1}^{2}P_{i}=
\left(\begin{array}{c} 5\\7  \end{array}\right)+\left(\begin{array}{ccccc}-&1&2&3&4\\0&1&2&-&4  \end{array} \right)\cdot 6
\]
were we used the disallowed values of $m_{j}$ calculated above for the prospective prime number example, giving:

\[
\widetilde{t}_{3}=\left(\begin{array}{ccccc}-&11&17&23&29\\7&13&19&-&31  \end{array} \right) = \left(\begin{array}{ccc}11&17&29\\13&19&31  \end{array} \right)
\]

Then these prospective twin primes (actual twin primes in this example) can be used to generate all the prospective twin primes in $S_{4}$, etc. See Figure~\ref{fig:families-of-pro-twin-primes}.

\begin{figure}
	\centering
	\includegraphics[width=1.0\linewidth]{"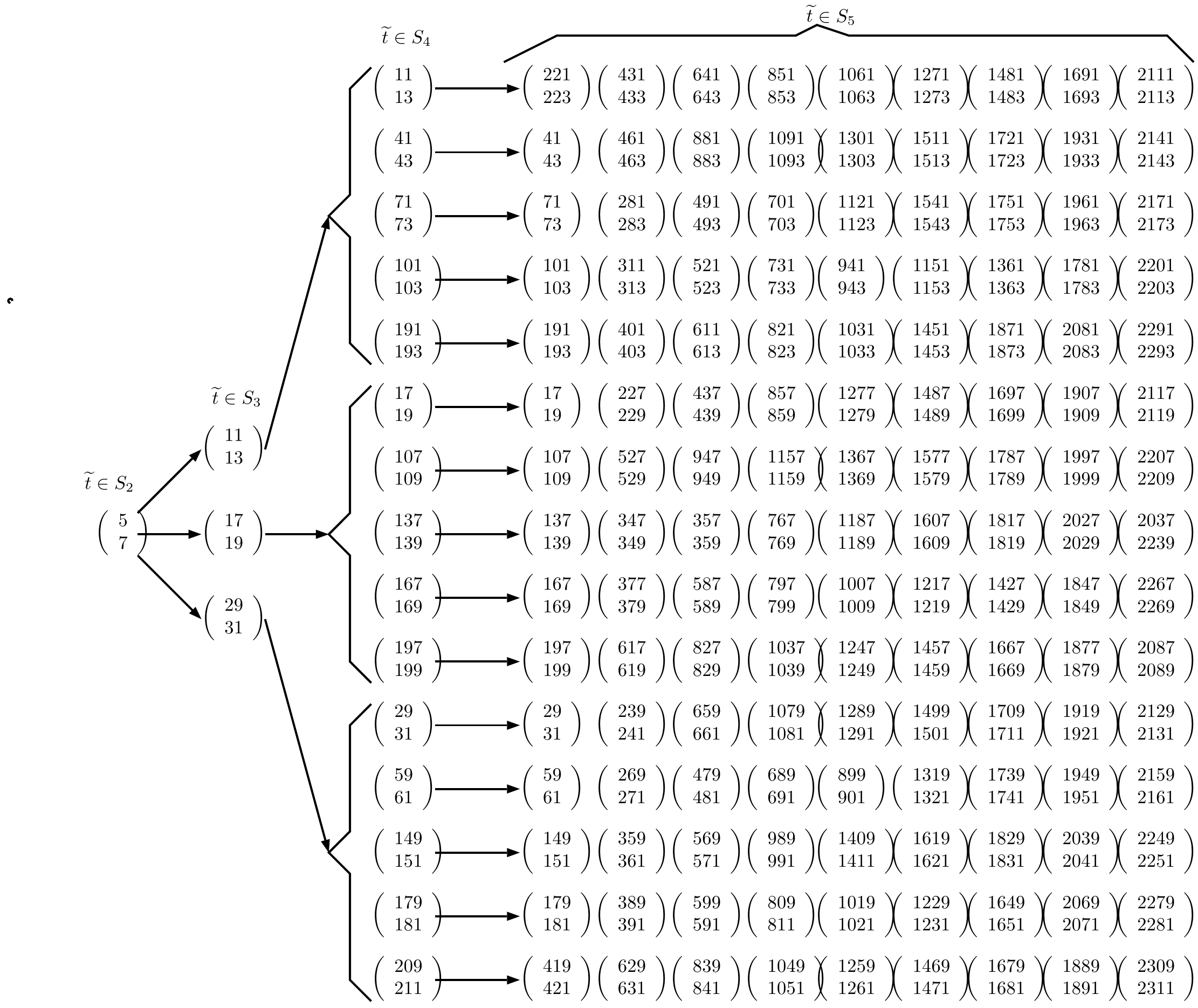"}
	\caption{Branching families of prospective twin primes in the progression of sets: $S_{2}\rightarrow S_{3}\rightarrow S_{4} \rightarrow S_{5}$, where in each set $S_{k}$, prospective twin primes are prime to $P\le P_{k}$.}
	\label{fig:families-of-pro-twin-primes}
\end{figure}

\section{Distribution of Prospective Prime Numbers and Prospective Twin Primes}\label{S: relative_prime_count}

\subsection{Distribution of prospective prime numbers}

We can represent a generic prospective prime number in $S_{k}$ as:

\begin{equation}\label{E: genproprime1}
	\widetilde{P}_{\left\{k\right\}}=\widetilde{P}_{\left\{k-1\right\}} + (m_{k}) \prod_{i=1}^{k-1}P_{i}
\end{equation}
where there are $n_{k-1}^{\widetilde{P}}=\prod_{i=1}^{k-1}(P_{i}-1)$ separate $\widetilde{P}_{\left\{k-1\right\}}$ in $S_{k-1}$ and where $(m_{k})$ represents a spectrum of $P_{k}-1$ numbers out of the $P_{k}$ numbers ranging from $0$ to $P_{k}-1$ thus making Equation~(\ref{E: genproprime1}) an abrieviation for $P_{k}-1$ separate equations for each $\widetilde{P}_{\left\{k-1\right\}}\in S_{k-1}$. 

Looking at $S_{k}$ as a sequence of subsets $S_{k}=\left\{S_{k}^{(0)},S_{k}^{(1)},S_{k}^{(2)},\cdots,S_{k}^{(P_{k}-1)}\right\}$, where $S_{k}^{(j)}=\left\{5+j\cdot \prod_{i=1}^{k-1}P_{i}\longrightarrow 4+(j+1)\cdot \prod_{i=1}^{k-1}P_{i}\right\}$ one can see that for a specific value $(m_{k})=m$:

\[
{\widetilde{P}}_{\{k-1\}} +m \prod_{i=1}^{k-1}P_{i} \in S_{k}^{(m)}
\]

Therefore equation~(\ref{E: genproprime1}) represents that each $\widetilde{P}_{\left\{k-1\right\}}\in S_{k-1}$  generates $P_{k}-1$ prospective prime numbers $\widetilde{P}_{\{k\}}\in S_{k}$, distributed one each to all but one subset $S_{k}^{(m_{k})}$.

Let the one disallowed value of $(m_{k})$ corresponding to $\widetilde{P}_{\{k\}}\bmod{P_{k}}=0$ be $\widehat{m}_{k}$, where:

\begin{equation}\label{E: missingsubset}
	\left[ \widetilde{P}_{\{k-1\}} +\widehat{m}_{k} \prod_{i=1}^{k-1}P_{i}\right]\bmod{P_{k}}=0
\end{equation}

and we can express this as:

\[
\widetilde{P}_{\{k-1\}}\bmod{P_{k}} +\widehat{m}_{k} \left(\prod_{i=1}^{k-1}P_{i}\right)\bmod{P_{k}}=\alpha P_{k}
\]
where $\alpha$ is the smallest integer such that $\widehat{m}_{k}$ is also an integer, giving:  

\begin{equation}\label{E: mhatequ}
	\widehat{m}_{k}=\frac{\alpha P_{k}- \widetilde{P}_{\{k-1\}}\bmod{P_{k}}}{\left(\prod_{i=1}^{k-1}P_{i}\right)\bmod{P_{k}}}
\end{equation}

We can let $\widetilde{P}_{\{k-1\}}\bmod{P_{k}}=\beta$ and let $\left(\prod_{i=1}^{k-1}P_{i}\right)\bmod{P_{k}}=\gamma$, where: $0\le \alpha, \beta \le P_{k}-1$ and $1 \le \gamma \le P_{k}-1$. Giving:

\[
\widehat{m}_{k} = \frac{\alpha P_{k}-\beta}{\gamma}
\]

which requires that: $\alpha P_{k}\equiv \beta\bmod{\gamma}$. This has a unique solution for $\alpha$ dependent on $\beta$ given that $\gamma$ is fixed for $S_{k}$.\cite{FG,HW} Note that $\widehat{m}_{k}=0$ only if $\alpha=0=\beta$, which occurs only when $P_{k} | \widetilde{P}_{\{k-1\}}$.

$\widehat{m}_{k}$ is the disallowed value of $(m_{k})$ in equation~(\ref{E: genproprime1}) which is unique for distinct $\beta=\widetilde{P}_{\{k-1\}}\bmod{P_{k}}$. Any two distinct instances of $\widetilde{P}_{\{k-1\}}$ that differ by a multiple of $P_{k}$ will therefore have the same value of $\widehat{m}_{k}$. Meaning  they have the same disallowed subset and that they each generate one prospective prime number to all the other immediate subsets of $S_{k}$.

Equation~(\ref{E: genproprime1}) gives:

\[
\widetilde{P}_{\left\{k\right\}}\bmod{P_{k}}=\left[\widetilde{P}_{\left\{k-1\right\}}\bmod{P_{k}} + (m_{k}) \left(\prod_{i=1}^{k-1}P_{i}\right)\bmod{P_{k}}\right]\bmod{P_{k}}
\]

The only variable on the right hand side given a choice of $\widetilde{P}_{\{k-1\}}$ is $(m_{k})$, which takes on $P_{k}-1$ distinct values in the range $0\le m_{k} \le P_{k-1}$. Therefore the set of $\widetilde{P}_{\{k\}}$ generated by one instance of $\widetilde{P}_{\{k-1\}}$ represents $P_{k}-1$ distinct residue classes mod $P_{k}$. The only residue class not represented is that corresponding to $(m_{k})=\widehat{m}_{k}$ given in Equation~(\ref{E: mhatequ}). 

In generating the set of all $\widetilde{P}_{\{k\}}$ we start with $n_{k-1}^{\widetilde{P}}=\prod_{i=1}^{k-1}(P_{i}-1)$ prospective prime numbers in $S_{k-1}$. These represent $n_{k-2}^{\widetilde{P}}$ sets of $\widetilde{P}_{\{k-1\}}$ each with $P_{k-1}-1$ members generated by a single $\widetilde{P}_{\{k-2\}}$. Each of these sets, which we represent as $\left\{  \widetilde{P}_{\{k-1\}}\right\}_{\widetilde{P}_{\{k-2\}}}$, represents $P_{k-1}-1$ distinct residue classes mod $P_{k-1}$ and therefore the same number of residue classes mod $P_{k}$.\footnote{
	$N=\left\lfloor \frac{N}{P} \right\rfloor \cdot P + N\bmod{P}=\left\lfloor \frac{N}{P'} \right\rfloor \cdot P' + N\bmod{P'} \longrightarrow N\bmod{P'}=\left\lfloor \frac{N}{P} \right\rfloor \cdot P -\left\lfloor \frac{N}{P'} \right\rfloor \cdot P' + N\bmod{P}$. Therefore, given $P'>P$ there is a one-to-one correspondence of the residue classes mod $P$ to a subset of the residue classes mod $P'$.
}

Equation~(\ref{E: mhatequ}) shows there is a one-to-one correspondence between the residue class of $\widetilde{P}_{\{k-1\}}$ mod $P_{k}$ and $\widehat{m}_{k}$. Each  member of set $\left\{  \widetilde{P}_{\{k-1\}}\right\}_{\widetilde{P}_{\{k-2\}}}$ used to generate $\widetilde{P}_{\{k\}}$ has a corresponding disallowed value $\widehat{m}_{k}$ unique among the members of that set. Therefore, among the $P_{k}$ subsets $S_{k}^{(m_{k})}$ of $S_{k}$, $P_{k-1}-1$ have  $P_{k-1}-2$ prospective prime numbers, $\widetilde{P}_{\{k\}}$, generated by the set $\left\{  \widetilde{P}_{\{k-1\}}\right\}_{\widetilde{P}_{\{k-2\}}}$ and $P_{k}-(P_{k-1}-1)$ have $P_{k-1}-1$  prospective prime numbers generated by that same set. So, every immediate subset of $S_{k}$ has a minimum of $P_{k-1}-2$ prospective prime numbers $\widetilde{P}_{\{k\}}$.

Then given that there are $n_{k-2}^{\widetilde{P}}$ subsets $\left\{ \widetilde{P}_{\{k-1\}}\right\}_{\widetilde{P}_{\{k-2\}}}$, each subset $S_{k}^{(m_{k})}$ of $S_{k}$ has a number of prospective prime numbers $\widetilde{P}_{\{k\}}$ at least equal to:

\begin{align}\label{E: numProPinSub}\notag
	n_{S_{k}^{(m_{k})}}^{\widetilde{P}} &\ge n_{k-2}^{\widetilde{P}} \cdot (P_{k-1}-2)\\\notag
	&= (P_{k-1}-2)\prod_{i=1}^{k-2}(P_{i}-1)\\\notag
	&= [(P_{k-1}-1)-1]\prod_{i=1}^{k-2}(P_{i}-1)\\\notag
	&= \prod_{i=1}^{k-1}(P_{i}-1)-\prod_{i=1}^{k-2}(P_{i}-1)\\
	&= n_{k-1}^{\widetilde{P}}-n_{k-2}^{\widetilde{P}}
\end{align}

Given that result we can state the following theorem:

\begin{theorem}
	Given the set $S_{k}=\left\{N: 5 \le N \le 4+\prod_{i=1}^{k}P_{i}\right\}$ and subsets 
	$S_{k}^{(m)}=\left\{5+m\prod_{i=1}^{k-1}P_{i} \le N \le 4+(m+1)\prod_{i=1}^{k-1}P_{i}\right\}$, $0 \le m \le (P_{k}-1)$, then every subset $S_{k}^{(m)}$ has  $n_{S_{k}^{(m)}}^{\widetilde{P}}\ge n_{k-1}^{\widetilde{P}}-n_{k-2}^{\widetilde{P}}$ prospective prime numbers $\widetilde{P} \in S_{k}^{(m)}$, where $\widetilde{P} = N\in S_{k}$ and $P|N \longrightarrow P > P_{k}$.
\end{theorem}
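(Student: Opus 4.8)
Throughout I assume $k\ge 3$, so that $n_{k-2}^{\widetilde{P}}$ is defined. The plan is to read this theorem as the formal packaging of the inequality already derived in Equation~(\ref{E: numProPinSub}); what remains is to arrange the count so that the bound falls out for every fixed subset index $m$. Fix $m$ with $0\le m\le P_{k}-1$. By the generation rule~(\ref{E: genproprime1}), together with the fact established in Section~\ref{S: PropofProPriNumbers} that every prospective prime of $S_{k}$ is the extension of a unique prospective prime of $S_{k-1}$, the prospective primes lying in $S_{k}^{(m)}$ are exactly the numbers $\widetilde{P}_{\{k-1\}}+m\prod_{i=1}^{k-1}P_{i}$ for those $\widetilde{P}_{\{k-1\}}\in S_{k-1}$ whose associated disallowed index $\widehat{m}_{k}$, attached by Equation~(\ref{E: mhatequ}), satisfies $\widehat{m}_{k}\neq m$. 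Since the translation $\widetilde{P}_{\{k-1\}}\mapsto\widetilde{P}_{\{k-1\}}+m\prod_{i=1}^{k-1}P_{i}$ is injective, $n_{S_{k}^{(m)}}^{\widetilde{P}}$ is precisely the number of $\widetilde{P}_{\{k-1\}}\in S_{k-1}$ with $\widehat{m}_{k}\neq m$, and it suffices to show that at least $n_{k-1}^{\widetilde{P}}-n_{k-2}^{\widetilde{P}}$ of the $n_{k-1}^{\widetilde{P}}$ prospective primes of $S_{k-1}$ satisfy this.

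Next I would use the nesting from Section~\ref{S: relative_prime_count}: partition the $n_{k-1}^{\widetilde{P}}=\prod_{i=1}^{k-1}(P_{i}-1)$ prospective primes of $S_{k-1}$ into the $n_{k-2}^{\widetilde{P}}$ families $\left\{\widetilde{P}_{\{k-1\}}\right\}_{\widetilde{P}_{\{k-2\}}}$, each generated by a single $\widetilde{P}_{\{k-2\}}\in S_{k-2}$ and consisting of the $P_{k-1}-1$ numbers $\widetilde{P}_{\{k-2\}}+m_{k-1}\prod_{i=1}^{k-2}P_{i}$ as $m_{k-1}$ runs over $P_{k-1}-1$ of the residues $0,1,\dots,P_{k-1}-1$. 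Because $P_{k}$ is prime and strictly exceeds every $P_{i}$ with $i\le k-2$, the factor $\prod_{i=1}^{k-2}P_{i}$ is a unit modulo $P_{k}$; and since the $P_{k-1}-1$ chosen values of $m_{k-1}$ are distinct integers inside $\{0,\dots,P_{k-1}-1\}\subseteq\{0,\dots,P_{k}-1\}$, they stay pairwise distinct modulo $P_{k}$ after multiplication by that unit and translation by $\widetilde{P}_{\{k-2\}}$. Hence the members of one family occupy $P_{k-1}-1$ distinct residue classes modulo $P_{k}$.

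Then I would invoke Equation~(\ref{E: mhatequ}) once more: with $\gamma=\left(\prod_{i=1}^{k-1}P_{i}\right)\bmod P_{k}\in\{1,\dots,P_{k}-1\}$ fixed for all of $S_{k}$, the defining congruence forces $\widehat{m}_{k}\gamma\equiv-\widetilde{P}_{\{k-1\}}\pmod{P_{k}}$, so $\widehat{m}_{k}$ is a bijective function of the residue class $\widetilde{P}_{\{k-1\}}\bmod P_{k}$. Combined with the previous step, the $P_{k-1}-1$ members of a family carry $P_{k-1}-1$ pairwise distinct disallowed indices $\widehat{m}_{k}$; therefore at most one member has $\widehat{m}_{k}=m$, and the remaining $\ge P_{k-1}-2$ members each send a prospective prime into $S_{k}^{(m)}$. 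Summing over the $n_{k-2}^{\widetilde{P}}$ families and using $n_{k-1}^{\widetilde{P}}=(P_{k-1}-1)n_{k-2}^{\widetilde{P}}$ from Theorem~\ref{T: numpropinsk},
\[
n_{S_{k}^{(m)}}^{\widetilde{P}}\;\ge\;(P_{k-1}-2)n_{k-2}^{\widetilde{P}}\;=\;(P_{k-1}-1)n_{k-2}^{\widetilde{P}}-n_{k-2}^{\widetilde{P}}\;=\;n_{k-1}^{\widetilde{P}}-n_{k-2}^{\widetilde{P}},
\]
for every $m$, which is the claim.

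I expect the main obstacle to be the modular-distinctness step: one must verify carefully that the $P_{k-1}-1$ members of each family genuinely occupy distinct residue classes modulo $P_{k}$, and this rests on exactly two facts — that $\gcd\!\left(\prod_{i=1}^{k-2}P_{i},\,P_{k}\right)=1$, and that $P_{k-1}-1<P_{k}$, so that distinct values of $m_{k-1}$ are already distinct modulo $P_{k}$. This is the only place the argument can silently fail; in particular the direct coprimality reasoning above, rather than any general statement comparing residue classes for two different moduli, is what is needed here. A secondary bookkeeping point is ruling out double-counting of prospective primes in $S_{k}^{(m)}$ arising from different families, which is immediate from the injectivity of the translation $\widetilde{P}_{\{k-1\}}\mapsto\widetilde{P}_{\{k-1\}}+m\prod_{i=1}^{k-1}P_{i}$ noted in the first paragraph.
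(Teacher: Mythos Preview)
Your proposal is correct and follows essentially the same route as the paper: partition the prospective primes of $S_{k-1}$ into the $n_{k-2}^{\widetilde{P}}$ families $\{\widetilde{P}_{\{k-1\}}\}_{\widetilde{P}_{\{k-2\}}}$, show that the members of each family have pairwise distinct disallowed indices $\widehat{m}_{k}$, conclude that at most one member per family misses a given $S_{k}^{(m)}$, and sum. Your treatment of the modular-distinctness step is in fact tighter than the paper's: the paper appeals to a general correspondence between residue classes modulo $P_{k-1}$ and modulo $P_{k}$, whereas you argue directly from $\gcd\!\left(\prod_{i=1}^{k-2}P_{i},P_{k}\right)=1$ and $P_{k-1}-1<P_{k}$, which is the cleaner justification and exactly the point you flagged as the potential obstacle.
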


\begin{proof}
	Let $\widetilde{P}_{\{k\}}$ represent a generic prospective prime number in $S_{k}$. Then consider the subset $\left\{\widetilde{P}_{\{k-1\}}\right\}_{\widetilde{P}_{\{k-2\}}}$ of prospective prime numbers in $S_{k-1}$ generated by a single $\widetilde{P}_{\{k-2\}}$ via the relation:
	
	\[
	\left\{\widetilde{P}_{\{k-1\}}\right\}_{\widetilde{P}_{\{k-2\}}}=\widetilde{P}_{\{k-2\}} + (m_{k-1})\prod_{i=1}^{k-2}P_{i}
	\]
	where the different $\widetilde{P}_{k-1}$ are each distinguished by their values of $m_{k-1}$, where $m_{k-1}$ has $P_{k-1}-1$ valid values among, $0 \le m_{k-1} \le P_{k-1}-1$. Therefore, each $\widetilde{P}_{\{k-1\}}\in \left\{\widetilde{P}_{\{k-1\}}\right\}_{\widetilde{P}_{\{k-2\}}}$ belongs to a unique subset $S_{k-1}^{(m_{k-1})} \subset S_{k-1}$ meaning  $\left\{\widetilde{P}_{\{k-1\}}\right\}_{\widetilde{P}_{\{k-2\}}}$ consists of one prospective prime number in each of $P_{k-1}-1$ of the $P_{k-1}$ subsets of $S_{k-1}^{(m)} \subset S_{k-1}$.
	
	Note that each $\widetilde{P}_{\{k-1\}}\in \left\{\widetilde{P}_{\{k-1\}}\right\}_{\widetilde{P}_{\{k-2\}}}$ belongs to a unique residue class mod $P_{k-1}$ as determined by its associated value of $m_{k-l}$ and therefore also belongs to a unique residue class mod $P_{k}$, given $P_{k}>P_{k-1}$. 
	
	Now consider:
	
	\[
	\left\{\widetilde{P}_{\{k\}}\right\}=\left\{\widetilde{P}_{\{k-1\}}\right\}_{\widetilde{P}_{\{k-2\}}}+(m_{k})\prod_{i=1}^{k-1}P_{i}
	\]
	where we are representing multiple equations, for the $(P_{k-1}-1)$ values of $\widetilde{P}_{\{k-1\}}\in\left\{\widetilde{P}_{\{k-1\}}\right\}_{\widetilde{P}_{\{k-2\}}}$ and the $P_{k}-1$ valid values of $m_{k}$,
	where $m_{k}$ takes the values $0 \le m_{k} \le P_{k}-1$ with one disallowed value given by:
	
	\[
	\widehat{m}_{k}=\frac{\alpha \cdot P_{k}-\left\{\widetilde{P}_{\{k-1\}}\right\}_{\widetilde{P}_{\{k-2\}}}\bmod{P_{k}}}{\left(\prod_{i=1}^{k-1}P_{i}\right)\bmod{P_{k}}}
	\]
	where again we represent multiple equations. $\alpha$ is the least integer in each case that yields an integer for $\widehat{m}_{k}$.
	
	One can see from the equation for $\widehat{m}_{k}$, where each  $\widetilde{P}_{\{k-1\}}\in\left\{\widetilde{P}_{\{k-1\}}\right\}_{\widetilde{P}_{\{k-2\}}}$ represents a unique residue class mod $P_{k}$ and therefore each $\widetilde{P}_{\{k\}}$ derived from that $\widetilde{P}_{\{k-1\}}$ has a unique disallowed subset $S_{k}^{(\widehat{m}_{k})}$. Therefore, for each $\widetilde{P}_{\{k-2\}}$ there are $(P_{k-1}-1)$ individual  $\widetilde{P}_{\{k-1\}}\in \left\{\widetilde{P}_{\{k-1\}}\right\}_{\widetilde{P}_{\{k-2\}}}$ each which generates associated $\widetilde{P}_{k}$ distributed to all subsets of $S_{k}^{(m)} \subset S_{k}$ except for its unique disallowed subset. This leads to a minimum of $(P_{k-1}-2)$ individual $\widetilde{P}_{\{k\}}$ in each subset of $S_{k}^{(m)} \subset S_{k}$ for each $\widetilde{P}_{\{k-2\}}$ in $S_{k-2}$.
	
	Given that there are $n_{k-2}^{\widetilde{P}}=\prod_{i=1}^{k-2}(P_{i}-1)$ individual $\widetilde{P}_{\{k-2\}} \in S_{k-2}$ we get the number total number of $\widetilde{P}_{\{k\}}$ in each subset $S_{k}^{(m)}$ given by:
	
	\begin{equation}
		n_{S_{k}^{(m)}}^{\widetilde{P}} \ge n_{k-2}^{\widetilde{P}} \cdot (P_{k-1}-2) =  n_{k-1}^{\widetilde{P}}-n_{k-2}^{\widetilde{P}}
	\end{equation}
	
\end{proof}

\subsection{Distribution of prospective twin primes}

We know from Theorem~\ref{T: noprotpp} that $S_{k-1}$ contains $n_{k-1}^{\widetilde{t}}=\prod_{i=1}^{k-1}(P_{i}-2)$ prospective twin primes prime to all $P \le P_{k-1}$ and $S_{k}$ contains $n_{k}^{\widetilde{t}}=\prod_{i=1}^{k}(P_{k}-2)$ prospective twin primes prime to all $P \le P_{k}$.  
Let $\widetilde{t}_{k-1}=\left(\begin{array}{c} \widetilde{P}_{n_{k-1},1}\\ \widetilde{P}_{n_{k-1},3} \end{array}\right)$ be a generic prospective twin prime in $S_{k-1}$  and let $\widetilde{t}_{k}=\left(\begin{array}{c} \widetilde{P}_{n_{k},1}\\ \widetilde{P}_{n_{k},3} \end{array}\right)$ be a corresponding prospective twin prime in $S_{k}$ given by:

\[
\widetilde{t}_{k}=\widetilde{t}_{k-1}+(m_{k})\prod_{i=1}^{k-1}P_{i}
\]
where this actually represents separate equations relating $\widetilde{P}_{n_{k},1}$ to $\widetilde{P}_{n_{k-1},1}$ and $\widetilde{P}_{n_{k},3}$ to $\widetilde{P}_{n_{k-1},3}$ both using the same value of $m_{k}$ among the $P_{k}-2$ allowed values of $(m_{k})$, where:

\[
0 \le m_{k} \le P_{k}-1 
\]
and where additionally:

\[
m_{k}\ne \widehat{m}_{k}^{(1)} =\frac{\alpha_{1} P_{k}-\widetilde{P}_{n_{k-1},1}\bmod{P_{k}}}{\left(\prod_{i=1}^{k-1}P_{i}\right)\bmod{P_{k}}} \quad \textrm{and} \quad m_{k}\ne \widehat{m}_{k}^{(3)}= \frac{\alpha_{3} P_{k}-\widetilde{P}_{n_{k-1},3}\bmod{P_{k}}}{\left(\prod_{i=1}^{k-1}P_{i}\right)\bmod{P_{k}}}
\]
where $\widehat{m}_{k}^{(1)}$ and $\widehat{m}_{k}^{(3)}$ specify disallowed sets and $\alpha_{1}$ and $\alpha_{3}$ represent the lowest integer values yielding integer solutions for $\widehat{m}_{k}^{(1)}$ and $\widehat{m}_{k}^{(3)}$.

Each prospective twin prime, prime to all $P\le P_{k-1}$, in $S_{k-1}$ can be used to generate a prospective twin prime, prime to all $P \le P_{k}$, in each subset $S_{k}^{(m_{k})}$ except for the subsets $S_{k}^{\left(\widehat{m}_{k}^{(1)}\right)}$ and  $S_{k}^{\left(\widehat{m}_{k}^{(3)}\right)}$, where the latter are specific to each $\widetilde{t}_{k-1} \in S_{k-1}$. 

Given that $\widetilde{P}_{n_{k-1},3}\bmod{P_{k}}=\left[\widetilde{P}_{n_{k-1},1}\bmod{P_{k}}+2\right]\bmod{P_{k}}$, we always have $\widehat{m}_{k}^{(1)} \ne \widehat{m}_{k}^{(3)}$ starting with any specific $\widetilde{t}_{k-1}$. Therefore, each prospective twin prime of $S_{k-1}$ projects a prospective twin prime into $P_{k}-2$ of the $P_{k}$ subsets $S_{k}^{(m_{k})}$ of $S_{k}$.

Analogous to the approach in the last section we consider
generating all the $\widetilde{t}_{k}\in S_{k}$ from the  $\widetilde{t}_{k-1}\in S_{k-1}$ where the latter consist of $n_{k-2}^{\widetilde{t}}=\prod_{i=2}^{k-2}(P_{i}-2)$ sets of $\left\{\widetilde{t}_{k-1}\right\}_{\widetilde{t}_{k-2}}$.  Where each of the sets $\left\{\widetilde{t}_{k-1}\right\}_{\widetilde{t}_{k-2}}$ consist of $(P_{k-1}-2)$ prospective twin primes in $S_{k-1}$ that are generated from the same $\widetilde{t}_{k-2}$.

The problem in this case is that while the sets of $\widehat{m}_{k-1}^{(1)}$ and $\widehat{m}_{k-1}^{(3)}$ are unique within each set, being derived from one specific $\widetilde{t}_{k-2}$, there can be common values between the two sets for two distinct $\widetilde{t}_{k-1}\in \left\{ \widetilde{t}_{k-1} \right\}_{\widetilde{t}_{k-2}}$.  This can occur when:

\[
\widetilde{t}'_{k-1}=\widetilde{t}_{k-1} \pm (nP_{k}\pm 2)
\]
This condition implies that $P_{n_{k-1},1}\bmod{P_{k}}=P_{n'_{k-1},3}\bmod{P_{k}}$ or $P_{n'_{k-1},1}\bmod{P_{k}}=P_{n_{k-1},3}\bmod{P_{k}}$ and consequently: $\widehat{m}_{k}^{(1)}=\widehat{m}_{k}^{'(3)}$ or $\widehat{m}_{k}^{(3)}=\widehat{m}_{k}^{'(1)}$.

Also, given that the two $\widetilde{t}_{k-1}$ are derived from the same $\widetilde{t}_{k-2}$, we have:

\[
\widetilde{t}_{k-1}=\widetilde{t}_{k-2}+m_{k-1}\prod_{i=1}^{k-2}P_{i}
\]
and
\[
\widetilde{t}'_{k-1}=\widetilde{t}_{k-2}+m'_{k-1}\prod_{i=1}^{k-2}P_{i}
\]
giving:
\[
\Delta \widetilde{t}_{k-1}=
\Delta m_{k-1}\prod_{i=1}^{k-2}P_{i}=
\pm (nP_{k}\pm 2)
\]

Therefore in generating the set of $ \left\{ \widetilde{t}_{k-1} \right\}_{\widetilde{t}_{k-2}}$ instead of having four excluded subsets between a pair of $\widetilde{t}_{\{k-1\}}$ there will be three excluded subsets whenever that pair of prospective twin primes are generated with:

\[
\Delta m_{k-1}=\frac{n^{\pm}P_{k}\pm 2}{\prod_{i=1}^{k-2}P_{i}}
\]
That is $\widetilde{t}_{k-1}$ and $\widetilde{t}'_{k-1}$ may share an excluded subset, $S_{k-1}^{(m)}$. They cannot share the same two excluded subsets.

Letting: $\Delta m_{k-1}^+=\frac{n^{+}P_{k}+ 2}{\prod_{i=1}^{k-2}P_{i}}$ and $\Delta m_{k-1}^-=\frac{n^{-}P_{k}- 2}{\prod_{i=1}^{k-2}P_{i}}$, Gives:

\[
\Delta m_{k-1}^+ + \Delta m_{k-1}^-=\frac{n^{+}P_{k}+ 2}{\prod_{i=1}^{k-2}P_{i}} + \frac{n^{-}P_{k}- 2}{\prod_{i=1}^{k-2}P_{i}}=\frac{(n^{+} + n^{-})P_{k}}{\prod_{i=1}^{k-2}P_{i}}
\]

Since $\Delta m_{k-1}^+ + \Delta m_{k-1}^-$ must be an integer, this requires that $n^{+} + n^{-}=c\cdot \prod_{i=1}^{k-2}P_{i}$, were $c$ is an integer, giving:

\[
n^{+} + n^{-}=c\prod_{i=1}^{k-2}P_{i}
\]
giving:
\[
\Delta m_{k-1}^+ + \Delta m_{k-1}^-=cP_{k}
\] 

Additionally, there are $P_{k-1}$ subsets in $S_{k-1}$ and to be valid $\Delta m_{k-1}^{\pm}<P_{k-1}$, therefore $c=1$. 

As in the last section dealing with the distribution of prospective prime numbers in $S_{k}$ we would like to determine the number of prospective twin primes in the subsets of $S_{k}$. The number of cases where redundancies occur, e.g. $\widehat{m}_{k}^{(1)}=\widehat{m}_{k}^{'(3)}$, which may differ, means we will look for a minimum number.

As shown in the last section for prospective prime numbers, the sets of $\widehat{m}_{k}^{(1)}$ and $\widehat{m}_{k}^{(3)}$ each have internally unique values when generating $\widetilde{t}_{k}$ from the set of $\left\{\widetilde{t}_{k-1}\right\}_{\widetilde{t}_{k-2}}$.  Therefore for the purposes of counting $\widetilde{t}_{k}\in S_{k}^{(m)}$ we can use Table~\ref{T: numbsperset} as a guide for calculating the distribution of prospective twin primes to the subsets of $S_{k}^{(m)} \in S_{k}$.

\begin{table}[h]
	\begin{center}
		\caption{The table shows an allocation of prospective twin primes to subsets where the sets of  $\widehat{m}_{k}^{(1)}$ and $\widehat{m}_{k}^{(3)}$ are each separately unique. It also arbitrarily shows a case where $\Delta m =2$. Such a table for an arbitrary $S_{k}$ would have $P_{k}$ subsets across the top and $P_{k-1}-2$ individual $\widetilde{t}_{k-1}$ down the left-hand column.}
		\renewcommand{\arraystretch}{1.5}
		\begin{tabular}{|c|c|c|c|c|c|c|c|c|c|c|c|}
\hline
&$S_{k}^{(0)}$&$S_{k}^{(1)}$&$S_{k}^{(2)}$&$S_{k}^{(3)}$&$S_{k}^{(4)}$&$S_{k}^{(5)}$&$S_{k}^{(6)}$&$S_{k}^{(7)}$&$\cdots$&$S_{k}^{(P_{k}-2)}$&$S_{k}^{(P_{k}-1)}$ \\
\hline
$\widetilde{t}_{k-1}$&$\widehat{m}_{k}^{(1)}$&$\widetilde{t}_{k}$&$\widehat{m}_{k}^{(3)}$ &$\widetilde{t}_{k}$&$\widetilde{t}_{k}$&$\widetilde{t}_{k}$&$\widetilde{t}_{k}$&$\widetilde{t}_{k}$&$\cdots$&$\widetilde{t}_{k}$&$\widetilde{t}_{k}$   \\
\hline
$\widetilde{t}_{k-1}$&$\widetilde{t}_{k}$&$\widehat{m}_{k}^{(1)}$&$\widetilde{t}_{k}$&$\widehat{m}_{k}^{(3)}$&$\widetilde{t}_{k}$&$\widetilde{t}_{k}$&$\widetilde{t}_{k}$&$\widetilde{t}_{k}$&$\cdots$&$\widetilde{t}_{k}$& $\widetilde{t}_{k}$   \\
\hline
$\widetilde{t}_{k-1}$&$\widetilde{t}_{k}$&$\widetilde{t}_{k}$&$\widehat{m}_{k}^{(1)}$&$\widetilde{t}_{k}$&$\widehat{m}_{k}^{(3)}$&$\widetilde{t}_{k}$&$\widetilde{t}_{k}$&$\widetilde{t}_{k}$&$\cdots$&$\widetilde{t}_{k}$&$\widetilde{t}_{k}$  \\
\hline
$\widetilde{t}_{k-1}$&$\widetilde{t}_{k}$&$\widetilde{t}_{k}$&$\widetilde{t}_{k}$&$\widehat{m}_{k}^{(1)}$&$\widetilde{t}_{k}$&$\widehat{m}_{k}^{(3)}$&$\widetilde{t}_{k}$&$\widetilde{t}_{k}$&$\cdots$&$\widetilde{t}_{k}$&$\widetilde{t}_{k}$  \\
\hline
$\vdots$&$\vdots$&$\vdots$& $\vdots$ &$\vdots$&$\vdots$&$\vdots$  &$\vdots$&$\vdots$&$\cdots$&$\vdots$ &$\vdots$  \\
\hline
$\widetilde{t}_{k-1}$&$\widehat{m}_{k}^{(3)}$&$\widetilde{t}_{k}$&$\widetilde{t}_{k}$& $\widetilde{t}_{k}$ &$\widetilde{t}_{k}$&$\widetilde{t}_{k}$&$\widetilde{t}_{k}$  &$\widetilde{t}_{k}$&$\cdots$&$\widehat{m}_{k}^{(1)}$&$\widetilde{t}_{k}$    \\
\hline
		\end{tabular}
		\label{T: numbsperset} 
	\end{center}
\end{table}

The excluded subsets for a given $\widetilde{t}_{k-1}$ are represented where the $\widehat{m}_{k}^{(1)}$ and $\widehat{m}_{k}^{(3)}$ are shown. With some constraints, the actual distribution to specific subsets doesn't matter for calculating the minimum number of prospective twin primes per subset. Rows may be interchanged and the $\widehat{m}_{j}$ may be shifted along a row, subject to the following considerations: no two $\widehat{m}_{k}^{(1)}$ or two $\widehat{m}_{k}^{(3)}$ can occur in the same column; $\widehat{m}_{j}^{(3)}=\left[\widehat{m}_{j}^{(1)}+\Delta m\right]\bmod{P_{k}}$; and $\widehat{m}_{j}^{(1)}=\widehat{m'}_{j}^{(3)}$ can occur.  

The separation of  $\widehat{m}_{k}^{(1)}$ and $\widehat{m}_{k}^{(3)}$ affects the distribution to subsets and that can be calculated:

\begin{align}\notag
	\widetilde{t}_{k}=&\widetilde{t}_{k-1}+(m_{k})\prod_{i=1}^{k-1}P_{i}\\\notag
	\textrm{expanded gives:}&\\\notag
	\left(\begin{array}{c} P_{n_{k,1}} \\ P_{n_{k},3} \end{array}\right) =& \left(\begin{array}{c} \widetilde{P}_{n_{k-1},1} \\ \widetilde{P}_{n_{k-1},3} \end{array}\right) +  \left(\begin{array}{c} m_{k}^{(1)} \\ m_{k}^{(3)} \end{array}\right)\prod_{i=1}^{k-1}P_{i}
\end{align}

Then relating the two disallowed values of $m_{k}$:

\[
\widehat{m}_{k}^{(1)}=\frac{\alpha_{1}P_{k}-\widetilde{P}_{n_{k-1},1}\bmod{P_{k}}}{\left(\prod_{i=1}^{k-1}P_{i}\right)\bmod{P_{k}}}
\]
and
\begin{align}\notag
	\widehat{m}_{k}^{(3)}=&\frac{\alpha_{3}P_{k}-\widetilde{P}_{n_{k-1},3}\bmod{P_{k}}}{\left(\prod_{i=1}^{k-1}P_{i}\right)\bmod{P_{k}}}\\\notag
	=&\frac{\alpha_{3}P_{k}-(\widetilde{P}_{n_{k-1},1}+2)\bmod{P_{k}}}{\left(\prod_{i=1}^{k-1}P_{i}\right)\bmod{P_{k}}}\\\notag
	=&\frac{\alpha'_{3}P_{k}-\widetilde{P}_{n_{k-1},1}\bmod{P_{k}}-2}{\left(\prod_{i=1}^{k-1}P_{i}\right)\bmod{P_{k}}}\\\notag
	=&\frac{\alpha_{1}P_{k}-\widetilde{P}_{n_{k-1},1}\bmod{P_{k}}}{\left(\prod_{i=1}^{k-1}P_{i}\right)\bmod{P_{k}}}+
	\frac{\Delta\alpha\cdot P_{k}-2}{\left(\prod_{i=1}^{k-1}P_{i}\right)\bmod{P_{k}}}\\\notag
	\widehat{m}_{k}^{(3)}=&\widehat{m}_{k}^{(1)}+\frac{\Delta\alpha\cdot P_{k}-2}{\left(\prod_{i=1}^{k-1}P_{i}\right)\bmod{P_{k}}}
\end{align}

where $\Delta\alpha=\alpha'_{3}-\alpha_{1}$ are integers chosen to give an integer result, and $\alpha'_{3}$ is modified $\alpha_{3}$ to account for removing the $2$ from the mod function.

One can see that the separation of the two disallowed sets is the same for each prospective twin prime in $S_{k}$, therefore justifying the fixed separation shown in Figure~\ref{T: numbsperset}.

For the set $S_{k}$ we have $P_{k}$ subsets and $P_{k-1}-2$ prospective twin primes generated in $S_{k-1}$ for each $\widetilde{t}_{k-2}$. Therefore the left hand column of the corresponding table for $S_{k}$ would have $P_{k-1}-2$ rows of $\widetilde{t}_{k-1}$. Columns (subsets) with no occurrence of $\widehat{m}_{k}^{(1)}$ or $\widehat{m}_{k}^{(3)}$ have $P_{k-1}-2$ prospective twin primes $\widetilde{t}_{k}$. Columns with one occurrence of one or the other of $\widehat{m}_{k}^{(1)}$ or $\widehat{m}_{k}^{(3)}$ have $P_{k-1}-3$ prospective twin primes $\widetilde{t}_{k}$ and columns with  occurrences of both  $\widehat{m}_{k}^{(1)}$ and $\widehat{m}_{k}^{(3)}$ have $P_{k-1}-4$ prospective twin primes $\widetilde{t}_{k}$.

\begin{theorem}\label{T: disrofprotps}
	Given the set of sequential numbers $S_{k}=\left\{N: 5 \le N \le 4+\prod_{i=1}^{k}P_{i}\right\}$ and its $P_{k}$  subsets $S_{k}^{(m)} \subset S_{k}$, where \\$S_{k}^{(m)}=\left\{N: 5+m\prod_{i=1}^{k-1}P_{i} \le N \le 4+(m+1)\prod_{i=1}^{k-1}P_{i}\right\}$, $0 \le m \le P_{k}-1$, and letting $n_{S_{k}^{(m)}}^{\widetilde{t}}$ represent the number of prospective twin primes, $\widetilde{t}_{k}$, in each subset, then: $n_{S_{k}^{(m)}}^{\widetilde{t}}\ge  n_{k-1}^{\widetilde{t}}-2n_{k-2}^{\widetilde{t}}$ and where $P_{k}\le \widetilde{t}_{k} \le 4+\prod_{i=1}^{k}P_{i}$ and $P| \widetilde{t}_{k} \longrightarrow P>P_{k}$.
	
\end{theorem}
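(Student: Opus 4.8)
The plan is to reuse the family-decomposition of the previous subsection, but now tracking, for each generator, a \emph{pair} of excluded subsets instead of one. First I would recall from the construction of prospective twin primes that every $\widetilde{t}_{k}\in S_{k}$ arises uniquely as
\[
\widetilde{t}_{k}=\widetilde{t}_{k-1}+(m_{k})\prod_{i=1}^{k-1}P_{i},\qquad \widetilde{t}_{k-1}\in S_{k-1}\ \text{a prospective twin prime},\ \ m_{k}\in\{0,\dots,P_{k}-1\},
\]
with $m_{k}\notin\{\widehat{m}_{k}^{(1)},\widehat{m}_{k}^{(3)}\}$ and, as established just above, $\widehat{m}_{k}^{(1)}\ne\widehat{m}_{k}^{(3)}$ because $\widetilde{P}_{n_{k-1},3}\equiv\widetilde{P}_{n_{k-1},1}+2\pmod{P_{k}}$. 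Thus a single $\widetilde{t}_{k-1}$ contributes exactly one prospective twin prime to $P_{k}-2$ of the $P_{k}$ subsets $S_{k}^{(m)}$, missing precisely the two subsets indexed by $\widehat{m}_{k}^{(1)}$ and $\widehat{m}_{k}^{(3)}$; and uniqueness of $\widetilde{t}_{k-1}$ means these contributions, ranged over all $\widetilde{t}_{k-1}\in S_{k-1}$, partition the prospective twin primes of $S_{k}$.

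Next I would group the $n_{k-1}^{\widetilde{t}}$ prospective twin primes of $S_{k-1}$ into the $n_{k-2}^{\widetilde{t}}$ families $\{\widetilde{t}_{k-1}\}_{\widetilde{t}_{k-2}}$, each of size $P_{k-1}-2$ and consisting of the extensions of a single $\widetilde{t}_{k-2}\in S_{k-2}$; this grouping is exhaustive and the generator is unique, since two numbers of $S_{k-2}$ differing by a multiple of $\prod_{i=1}^{k-2}P_{i}$ must coincide. Within one family the members carry distinct values of $m_{k-1}$ among $0,\dots,P_{k-1}-1$, hence lie in distinct residue classes modulo $P_{k-1}$; since the step $\prod_{i=1}^{k-2}P_{i}$ is coprime to $P_{k}$ and any two members differ by $\Delta m_{k-1}\prod_{i=1}^{k-2}P_{i}$ with $1\le|\Delta m_{k-1}|\le P_{k-1}-1<P_{k}$, their first components $\widetilde{P}_{n_{k-1},1}$ are pairwise distinct modulo $P_{k}$. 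By the displayed formula for $\widehat{m}_{k}^{(1)}$ (a bijective function of $\widetilde{P}_{n_{k-1},1}\bmod P_{k}$ once the fixed quantity $\left(\prod_{i=1}^{k-1}P_{i}\right)\bmod P_{k}$ is fixed), the $P_{k-1}-2$ values $\widehat{m}_{k}^{(1)}$ occurring in a family are pairwise distinct, and likewise the $P_{k-1}-2$ values $\widehat{m}_{k}^{(3)}$.

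Then I would count per subset. Fix a subset $S_{k}^{(m)}$ and one family; a member of the family fails to place a prospective twin prime in $S_{k}^{(m)}$ exactly when $m=\widehat{m}_{k}^{(1)}$ or $m=\widehat{m}_{k}^{(3)}$ for that member. By the within-type distinctness just proved, at most one member of the family has $\widehat{m}_{k}^{(1)}=m$ and at most one has $\widehat{m}_{k}^{(3)}=m$, so at most two members skip $S_{k}^{(m)}$. Hence each family contributes at least $(P_{k-1}-2)-2=P_{k-1}-4$ prospective twin primes to $S_{k}^{(m)}$, and summing over the $n_{k-2}^{\widetilde{t}}$ families,
\[
n_{S_{k}^{(m)}}^{\widetilde{t}}\ \ge\ n_{k-2}^{\widetilde{t}}\,(P_{k-1}-4)\ =\ n_{k-2}^{\widetilde{t}}\bigl[(P_{k-1}-2)-2\bigr]\ =\ n_{k-1}^{\widetilde{t}}-2\,n_{k-2}^{\widetilde{t}}.
\]
The side conditions $P_{k}\le\widetilde{t}_{k}\le 4+\prod_{i=1}^{k}P_{i}$ and $P\mid\widetilde{t}_{k}\Rightarrow P>P_{k}$ are immediate from the definition of a prospective twin prime of $S_{k}$ together with the fact that the construction lands inside $S_{k}$.

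The main obstacle is the possible clash $\widehat{m}_{k}^{(1)}=\widehat{m}_{k}^{'(3)}$ between two distinct members of one family (the ``$\Delta m_{k-1}=(n^{\pm}P_{k}\pm2)/\prod_{i=1}^{k-2}P_{i}$'' situation analysed before the statement), which is what prevents a sharper bound and forces the $-2\,n_{k-2}^{\widetilde{t}}$ term. I would dispose of it by observing that the per-subset count needs only the two \emph{within-type} distinctness statements: a cross-type clash merely identifies two holes that were already being counted individually, so the worst case for a single column is still ``one $\widehat{m}_{k}^{(1)}$ together with one $\widehat{m}_{k}^{(3)}$,'' i.e. two missing members, exactly as used above. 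The constancy of $\widehat{m}_{k}^{(3)}-\widehat{m}_{k}^{(1)}$ across a family (shown just before the statement) makes the tabular picture of Table~\ref{T: numbsperset} legitimate, but it is not otherwise needed for the inequality.
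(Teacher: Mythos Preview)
Your proposal is correct and follows essentially the same approach as the paper's proof: group the $n_{k-1}^{\widetilde{t}}$ prospective twin primes of $S_{k-1}$ into the $n_{k-2}^{\widetilde{t}}$ families $\{\widetilde{t}_{k-1}\}_{\widetilde{t}_{k-2}}$ of size $P_{k-1}-2$, use within-type distinctness of $\widehat{m}_{k}^{(1)}$ and of $\widehat{m}_{k}^{(3)}$ across a family to get at most two misses per column, hence at least $P_{k-1}-4$ contributions per family per subset, and multiply by $n_{k-2}^{\widetilde{t}}$. Your justification of the key distinctness step---via $|\Delta m_{k-1}|<P_{k-1}<P_{k}$ and $\gcd\!\left(\prod_{i=1}^{k-2}P_{i},P_{k}\right)=1$---is in fact cleaner than the paper's (whose footnoted claim that distinct residues mod $P_{k-1}$ force distinct residues mod $P_{k}$ is not valid in general, though the conclusion is correct here for exactly the reason you give), and your remark that cross-type clashes $\widehat{m}_{k}^{(1)}=\widehat{m}_{k}^{'(3)}$ can only help the count makes explicit what the paper leaves to the table illustration.
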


\begin{proof}
	Given the preceding discussion we know that for each $\widetilde{t}_{k-2} \in S_{k-2}$ we have a minimum of $P_{k-1}-4$ prospective twin primes in each of the subsets $S_{k}^{(m)}$ which are prime to all $P \le P_{k}$. Then given that there are $n_{k-2}^{\widetilde{t}}=\prod_{i=1}^{k-2}(P_{i}-2)$ prospective twin primes in $S_{k-2}$ and representing the number of prospetive twin primes in $S_{k}^{(m)}$ as $n_{S_{k}^{(m)}}^{\widetilde{t}}$, we have:
	
	\begin{align}\label{E: mintwnprimes}\notag
		n_{S_{k}^{(m)}}^{\widetilde{t}} &\ge n_{k-2}^{\widetilde{t}} \cdot (P_{k-1}-4) \\\notag
		& = (P_{k-1}-4)\prod_{i=1}^{k-2}(P_{i}-2)\\\notag
		&= [(P_{k-1}-2)-2]\prod_{i=1}^{k-2}(P_{i}-2)\\\notag
		&= \prod_{i=1}^{k-1}(P_{i}-2)-2\prod_{i=1}^{k-2}(P_{i}-2)\\
		n_{S_{k}^{(m)}}^{\widetilde{t}}	&\ge  n_{k-1}^{\widetilde{t}}-2n_{k-2}^{\widetilde{t}}
	\end{align}
	
\end{proof}

Given that $S_{k}$ has $P_{k}$ subsets $S_{k}^{(m)}$, the stated minimum number of prospective twin primes in each subset accounts for $P_{k} \cdot(P_{k-1}-4)\prod_{i=3}^{k-2}(P_{i}-2)$ of the $n_{k}^{\widetilde{t}}$ total prospective twin primes in $S_{k}$. Therefore we have:

\[
\frac{P_{k} \cdot n_{S_{k}^{(m)}}^{\widetilde{t}}}{n_{k}^{\widetilde{t}}}
\ge \frac{P_{k}(P_{k-1}-4)\prod_{i=3}^{k-2}(P_{i}-2)}{\prod_{i=3}^{k}(P_{i}-2)}
=
\frac{P_{k}(P_{k-1}-4)}{(P_{k}-2)(P_{k-1}-2)}
\]

Note that $\frac{P_{k}}{P_{k}-2}>1$ and $\frac{P_{k-1}-4}{P_{k-1}-2}<1$ and their product is less than $1$, but is asymptotic to $1$ from below as $P_{k}$ increases.\footnote{
	Let $P_{k}=P_{k-1}+\Delta$, then $\frac{P_{k}(P_{k-1}-4)}{(P_{k}-2)(P_{k-1}-2)}=\frac{1}{1+\frac{2(\Delta+2)}{P_{k}[P_{k}-(\Delta +4)]}}$. Then from the prime number theorem we can approximate the average spacing of prime numbers as $\Delta=\ln{N}$, which shows that the fraction approaches $1$ for large $P_{k}$.
}
This is evident in Table~\ref{T: mintp}.

\begin{table}[h]
	\begin{center}
		\caption{Trend of $\frac{P_{k}(P_{k-1}-4)}{(P_{k}-2)(P_{k-1}-2)}$ as $P_{k}$ increases.}
		\renewcommand{\arraystretch}{1.5}
		\begin{tabular}{|c|c|c|c|c|}
			\hline
			$P_{k}$ & $P_{k-1}$ & $\frac{P_{k}}{P_{k}-2}$ & $\frac{P_{k-1}-4}{P_{k-1}-2}$ & $\frac{P_{k}(P_{k-1}-4)}{(P_{k}-2)(P_{k-1}-2)}$ \\
			\hline
			$11$&$7$  &$\frac{11}{9}$  &$\frac{3}{5}$  &$.73$  \\
			\hline
			$23$  &$19$  &$\frac{23}{21}$  & $\frac{15}{17}$&$.966$  \\
			\hline
			$53$  &$47$  &$\frac{53}{51}$  & $\frac{43}{45}$&$.993$   \\
			\hline
			$103$  &$101$  &$\frac{103}{101}$  & $\frac{97}{99}$&$.991$  \\
			\hline
			$1577$  &$1573$  &$\frac{1577}{1575}$  & $\frac{1569}{1571}$&$.999992$   \\
			\hline
		\end{tabular}
		\label{T: mintp}
	\end{center}
\end{table}

Therefore we can say that prospective twin primes are fairly evenly distributed between the subsets of $S_{k}$ and the minimum distribution of prospective twin primes to each subset $S_{k}^{(m)}$ approaches $\frac{n_{k}^{\widetilde{t}}}{P_{k}}$ as $P_{k}$ increases, where $n_{k}^{\widetilde{t}}$ is the total number of prospective twin primes in $S_{k}$ and $P_{k}$ is the number of subsets, $S_{k}^{(m)}$ in $S_{k}$.

Consider the set of prospective twin primes in $S_{k}$: $\widetilde{T}_{k} \subset S_{k}$, where:\footnote{
	We consider that $P | \widetilde{t}$ if $P$ divides either of the two components of $\widetilde{t}=\left(\begin{array}{c} P_{n,1} \\ P_{n,3}  \end{array}\right)$
}

\[ \widetilde{T}_{k}=\left\{\widetilde{t}: P_{k}< \widetilde{t} \le 4+\prod_{i=1}^{k}P_{i}\quad \& \quad P | \widetilde{t} \rightarrow P>P_{k} \right\} 
\]

If $\widetilde{t} \in \widetilde{T}_{k}$ then $\widetilde{t}=\widetilde{t}_{k}$. We know from the discussion on the distribution of prospective twin primes that the subset $S_{k}^{(0)}$ contains $n_{S_{k}^{(0)}}^{\widetilde{t}}$ prospective twin primes $\widetilde{t}_{k}$. Also since $S_{k}^{(0)}=S_{k-1}$ we know $\widetilde{T}_{k-1}$ contains  $n_{S_{k}^{(0)}}^{\widetilde{t}}$ prospective twin primes $>P_{k}$  that are prime to all $P \le P_{k}$; i.e., there are  $n_{S_{k}^{(0)}}^{\widetilde{t}}$ $\quad \widetilde{t}_{k-1}$ where $\widetilde{t}_{k-1}=\widetilde{t}_{k}$.

The set of prospective twin primes in $S_{k-1}$ is equal to:
\[
\widetilde{T}_{k-1}=\left[\widetilde{T}_{k}\cap \widetilde{T}_{k-1}\right] \cup \left\{\widetilde{t}_{k-1}: P_{k-1} < \widetilde{t}_{k-1} < 4+\prod_{i=1}^{k-1}P_{i} \quad \& \quad P_{k} | \widetilde{t}_{k-1}\right\}
\]

The first set involving the intersection of $\widetilde{T}_{k}$ and $\widetilde{T}_{k-1}$ includes those $\widetilde{t}_{k-1}$ where $\widetilde{t}_{k-1}=\widetilde{t}_{k}$, i.e. where $\widetilde{t}_{k-1}> P_{k}$ and $P_{k} \nmid \widetilde{t}_{k-1}$. The second set includes those $\widetilde{t}_{k-1}$ that contain the prime factor $P_{k}$ including $(P_{k},P_{k+1})$ if it is a twin prime. $\widetilde{t}_{k-1}$ that has $P_{k}$ as a prime factor does generate prospective twin primes in $S_{k}$ via $\widetilde{t}_{k-1}+(m_{k})\prod_{i=1}^{k-1}P_{i}$, but not for $m_{k}=0$ and therefore not in $S_{k}^{(0)}$. We can therefore place a limit on the number of $\widetilde{t}_{k-1}$ that contain prime factor $P_{k}$ by:

\begin{align}\notag
	n_{k-1}^{P_{k}|\widetilde{t}}=&n_{k-1}^{\widetilde{t}}-n_{S_{k}^{(m)}}^{\widetilde{t}}\\\notag
	\le& n_{k-1}^{\widetilde{t}}-[n_{k-1}^{\widetilde{t}}-2n_{k-2}^{\widetilde{t}}]\\\notag
	\le& 2n_{k-2}^{\widetilde{t}}
\end{align}

If we look at the fraction of prospective twin primes in $S_{k-1}$ with prime factor $P_{k}$ we get:

\[
\frac{n_{k-1}^{P_{k}|\widetilde{t}}}{n_{k-1}^{\widetilde{t}}}\le \frac{2n_{k-2}^{\widetilde{t}}}{n_{k-1}^{\widetilde{t}}}=2\frac{\prod_{i=3}^{k-2}(P_{i}-2)}{\prod_{i=3}^{k-1}(P_{i}-2)}=\frac{2}{P_{k-1}-2}
\]

While $n_{k-1}^{P_{k}|\widetilde{t}}$ grows if we pick a larger $P_{k}$, the fraction it represents of all prospective twin primes in $S_{k-1}$ becomes negligible.

This gives an interpretation to the formula in Theorem~\ref{T: disrofprotps}:

\[
n_{S_{k}^{(m)}}^{\widetilde{t}}\ge  n_{k-1}^{\widetilde{t}}-2n_{k-2}^{\widetilde{t}}
\]

The first term on the right is the number of prospective twin primes $\widetilde{t}_{k-1}\in S_{k-1}$ which if there were no disallowed subsets,  contribute equally to all subsets $S_{k}^{(m)}$. The negative term then represents subtracting the number of resulting $\widetilde{t}_{k}=\widetilde{t}_{k-1}+m\prod_{i=1}^{k-1}P_{i}$ in $S_{k}^{(m)}$ where $P_{k}| \widetilde{t}_{k}$. Therefore the negative term accounts for the disallowed subsets which is uniform across all subsets with respects to determining the minimum number of prospective twin primes in each $S_{k}^{(m)}$. Note also, since each subset of $S_{k-1}$ has a uniform minimum of $n_{S_{k-1}^{(m)}}^{\widetilde{t}}$ prospective twin primes, in this respect each subset of $S_{k-1}$ contributes equally to the minimum number of prospective twin primes in each subset of $S_{k}$. This latter point is key to proving Theorem~\ref{T: numtwinprimes}.

Given Theorems \ref{T: noprotpp} and \ref{T: disrofprotps} we can now prove the following theorem.

\begin{theorem}\label{T: numtwinprimes}
	Given integer $l\ge 4$, define $P_{k}=P_{\boldsymbol{\pi}\left(\sqrt{\prod_{i=1}^{l}P_{i}}\right)}$, then let $n_{P_{k}\rightarrow P_{k+1}^2}^{t}$ be the number of twin primes between $P_{k}$ and $P_{k+1}^2$, then:
	\[
	n_{P_{k}\rightarrow P_{k+1}^2}^{t}\ge \left(\prod_{j=l}^{k-1}\frac{P_{j}-4}{P_{j}-2}\right)\prod_{i=3}^{l}(P_{i}-2)=\left(\prod_{j=l}^{k-1}\frac{P_{j}-4}{P_{j}-2}\right)n_{l}^{\widetilde{t}}\\
	\] 
\end{theorem}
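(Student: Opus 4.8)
The plan is to pair a one-line sieve observation with an iterated use of Theorem~\ref{T: disrofprotps}. The sieve observation: if $\widetilde{t}=(a,a+2)$ is a prospective twin prime that is prime to every $P\le P_{k}$ and satisfies $P_{k}<a<a+2<P_{k+1}^{2}$, then $\widetilde{t}$ is an \emph{actual} twin prime. Indeed every prime factor of $a$ (resp.\ of $a+2$) exceeds $P_{k}$, hence is $\ge P_{k+1}$; were $a$ composite it would have at least two such factors and so $a\ge P_{k+1}^{2}$, a contradiction. Thus it is enough to produce many prospective twin primes, prime to $P\le P_{k}$, lying in the window $(P_{k},P_{k+1}^{2})$.

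Next I would locate that window. By definition $P_{k}$ is the largest prime $\le\sqrt{\prod_{i=1}^{l}P_{i}}$, so $P_{k}^{2}\le\prod_{i=1}^{l}P_{i}$, while $P_{k+1}>\sqrt{\prod_{i=1}^{l}P_{i}}$ gives $P_{k+1}^{2}>\prod_{i=1}^{l}P_{i}$. Hence the block $S_{l}=\{N:5\le N\le 4+\prod_{i=1}^{l}P_{i}\}$ sits inside $(P_{k},P_{k+1}^{2})$: every number of $S_{l}$ is $\le 4+\prod_{i=1}^{l}P_{i}$ (one checks $4+\prod_{i=1}^{l}P_{i}\le P_{k+1}^{2}$ for $l\ge 4$ — the gap above $\sqrt{\prod_{i=1}^{l}P_{i}}$ pushes $P_{k+1}^{2}-\prod_{i=1}^{l}P_{i}$ well past $4$), and any $N\in S_{l}$ prime to all $P\le P_{k}$ automatically exceeds $P_{k}$ (a positive integer $\le P_{k}$ is either one of $P_{3},\dots,P_{k}$, hence not prime to itself, or composite with a prime factor $\le P_{k}$). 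So it suffices to count prospective twin primes of $S_{k}$ that are prime to $P\le P_{k}$ and that fall in the sub-block $S_{l}$.

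For that count I would view $S_{l}$ successively as the $0$-th subset $S_{l+1}^{(0)}$ of $S_{l+1}$, then of $S_{l+2}$, \dots, up to $S_{k}$, and track the guaranteed number of prospective twin primes trapped in this fixed sub-block. Initially $S_{l}$ carries exactly $n_{l}^{\widetilde{t}}=\prod_{i=3}^{l}(P_{i}-2)$ prospective twin primes prime to $P\le P_{l}$. Passing from $S_{j}$ to $S_{j+1}$ introduces the new factor $P_{j+1}$, and — by Theorem~\ref{T: disrofprotps} (the step $S_{l+1}^{(0)}=S_{l}$ is literally that theorem with ``$k$''$=l+1$, giving $n_{l}^{\widetilde{t}}-2n_{l-1}^{\widetilde{t}}$), together with the remark that each subset of $S_{j}$ contributes equally to the minimum over the subsets of $S_{j+1}$ — the guaranteed count in the fixed sub-block is multiplied by $\frac{P_{j}-4}{P_{j}-2}$. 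Iterating over $j=l,\dots,k-1$ leaves at least
\[
\left(\prod_{j=l}^{k-1}\frac{P_{j}-4}{P_{j}-2}\right)n_{l}^{\widetilde{t}}=\left(\prod_{j=l}^{k-1}\frac{P_{j}-4}{P_{j}-2}\right)\prod_{i=3}^{l}(P_{i}-2)
\]
prospective twin primes in $S_{l}$ prime to every $P\le P_{k}$; by the sieve observation each is a genuine twin prime in $(P_{k},P_{k+1}^{2})$, giving the claim.

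The step I expect to be the real obstacle is the middle of this counting argument. Theorem~\ref{T: disrofprotps} delivers the factor $\frac{P_{j}-4}{P_{j}-2}$ for subsets of $S_{j+1}$ of the \emph{native} size $\prod_{i=1}^{j}P_{i}$, but from the second step onward $S_{l}$ is far smaller than that, so one needs the analogue of Theorem~\ref{T: disrofprotps} for these finer sub-blocks. Making that precise means re-running the ``grandparent'' count of Theorem~\ref{T: disrofprotps} inside each successive $S_{j}$ and checking that, when $P_{j+1}$ is imposed, the disallowed residue classes $\bmod P_{j+1}$ stay evenly spread across the sub-blocks, so that each ancestor-family of $P_{j}-2$ prospective twin primes loses at most two members at that stage. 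Merely bounding the per-family loss crudely only gives the weaker additive estimate $n_{l}^{\widetilde{t}}-2(k-l)n_{l-1}^{\widetilde{t}}$; quantifying the uniformity well enough to recover the multiplicative bound is the part that carries the weight, and it is exactly the distributional remark the paper singles out as ``key'' immediately before the theorem.
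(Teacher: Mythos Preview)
Your proposal follows the paper's argument essentially step for step: the same sieve observation that a prospective twin prime prime to $P\le P_{k}$ lying in $(P_{k},P_{k+1}^{2})$ is genuine, the same localisation of the window via $P_{k}^{2}\le\prod_{i=1}^{l}P_{i}<P_{k+1}^{2}$, the same chain $S_{l}=S_{l+1}^{(0)}\subset S_{l+2}^{(0)}\subset\cdots\subset S_{k}^{(0)}$, and the same iterated appeal to Theorem~\ref{T: disrofprotps} together with the uniform-contribution remark that precedes the theorem. The paper packages the iteration slightly differently---writing the count as $n_{S_{k}^{(0)}}^{\widetilde{t}}\prod_{j=l+1}^{k-1}\bigl(n_{S_{j}^{(0)}}^{\widetilde{t}}/n_{j}^{\widetilde{t}}\bigr)$ and then expanding---but after telescoping this is exactly your per-step factor $(P_{j}-4)/(P_{j}-2)$, and you have correctly isolated the uniformity step as the point carrying the weight of the argument.
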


\begin{proof}
	Given $l$ and $P_{k}=P_{\boldsymbol{\pi}\left(\sqrt{\prod_{i=1}^{l}P_{i}}\right)}$ consider the set of sequential natural numbers $S_{k}=\left\{5 \longrightarrow 4+\prod_{i=1}^{k}P_{i}\right\}$. We will show that $S_{k}$ always contains prospective twin primes, $\widetilde{t}_{k} \in S_{k}$ prime to all $P \le P_{k}$ where $P_{k}< \widetilde{t}_{k} < P_{k+1}^2$ and consequently $\widetilde{t}_{k}=t_{k}$ is an actual twin prime and the number of such twin primes meets the stated minimum.
	
	Note that while $\prod_{i=1}^{l}P_{i}+1$ is the largest prospective prime number in $S_{l}=\left\{5 \longrightarrow 4+\prod_{i=1}^{l}P_{i}\right\}$ in that it is prime to all $P \le P_{l}$, as a consequence of the multiplication rules $\prod_{i=1}^{l}P_{i}\pm 1$ cannot be the square of a prime number.\footnote{$\prod_{i=1}^{l}P_{i}-1=P_{n,1}$ and $\prod_{i=1}^{l}P_{i}+ 1=P_{n,3}$, where $n=\left[\prod_{i=3}^{l}P_{i}-1\right]$ is even. The multiplication rules (\ref{E: sequenceproducts2}) show that the square of any prime number, where $n_{i}+n_{j}=2n$, has an odd number for its elemental sequence number.} 
	
	With the definition of $P_{k}$ we have: 
	
	\[ 
	P_{k}^2 < \prod_{i=1}^{l}P_{i} \longrightarrow P_{k}^2 \in S_{l}
	\]
	
	and given
	
	$P_{k+1}^2=\left(P_{\boldsymbol{\pi}\left(\sqrt{\prod_{i=1}^{l}P_{i}}\right)+1}\right)^2$, we have:
	
	\[
	\prod_{i=1}^{l}P_{i} < P_{k+1}^2 < \prod_{i=1}^{l+1}P_{i}\longrightarrow  P_{k+1}^2 \in S_{l+1}\quad \& \quad P_{k+1}^2 \notin S_{l}
	\]
	
	Note that $P_{k+1}$ is the smallest prime number whose square is greater than $4+\prod_{i=1}^{l}P_{i}$ and $P_{k}$ is the largest prime number whose square is less than $\prod_{i=1}^{l}P_{i}$.  Therefore all prospective prime numbers and prospective twin primes in $S_{l}$ are less than $P_{k+1}^2$. It remains to show that some $\widetilde{t}_{l}$ are greater than $P_{k}$ and are prime to all $P \le P_{k}$ which means some $\widetilde{t}_{l}=\widetilde{t}_{k}$ or equivalently $\widetilde{T}_{l} \cap \widetilde{T}_{k}\ne \emptyset $, where $\widetilde{T}_{k}$ is the set of all $\widetilde{t}_{k}\in S_{k}$ and $\widetilde{T}_{l}$ is the set of all $\widetilde{t}_{l}\in S_{l}$. In doing this we will show the inequality for $n_{P_{k}\rightarrow P_{k+1}^2}^{t}$ holds.
	
	To prove $\widetilde{T}_{l} \cap \widetilde{T}_{k}\ne \emptyset $ we must trace: $\widetilde{t}_{l} \in S_{l}\rightarrow \widetilde{t}_{l}=\widetilde{t}_{l+1} \in S_{l+1}\rightarrow \cdots \rightarrow \widetilde{t}_{l}=\widetilde{t}_{k} \in S_{k}$. This can only be done through the zeroth subset of each set: $\widetilde{t}_{l} \in S_{l}\rightarrow \widetilde{t}_{l}=\widetilde{t}_{l+1} \in S_{l+1}^{(0)}\rightarrow \cdots \rightarrow \widetilde{t}_{l}=\widetilde{t}_{k} \in S_{k}^{(0)}$, requiring $m_{j}=0$ at each stage of: $\widetilde{t}_{k}=\widetilde{t}_{l}+\sum_{j=l+1}^{k}(m_{j})\prod_{i=j}^{k-1}P_{i}$, otherwise the resulting $\widetilde{t}_{k}$ may be larger than $P_{k+1}^2$. This is straightforward because:
	
	\[
	S_{l}=S_{l+1}^{(0)}\subset S_{l+2}^{(0)}\subset S_{l+3}^{(0)}\cdots \subset S_{k}^{(0)}
	\]	
	
	Based on our definitions, $S_{l+1}^{(0)}$ contains  $n_{S_{l+1}^{(0)}}^{\widetilde{t}}$ prospective twin primes, prime to $P\le P_{l+1}$, where $\widetilde{t}_{l+1}=\widetilde{t}_{l}$. We also know that $S_{l+2}^{(0)}=S_{l+1}$ has  $n_{S_{l+2}^{(0)}}^{\widetilde{t}}$ prospective twin primes, prime to $P\le P_{l+2}$. However all subsets of $S_{l+1}$ have contributed prospective twin primes to $S_{l+2}^{(0)}$. 
	
	 Given that all subsets of $S_{l+1}$ contribute prospective twin primes uniformly to all subsets to $S_{l+2}$ with respects to the minimum number in each subset, the fraction of prospective twin primes in $S_{l+2}^{(0)}$ generated from $\widetilde{t}_{l}=\widetilde{t}_{l+1} \in S_{l+1}^{(0)}$ is given by: 
	
	\[
	\frac{n_{S_{l+1}^{(0)}}^{\widetilde{t}}}{n_{l+1}^{\widetilde{t}}}n_{S_{l+2}^{(0)}}^{\widetilde{t}} \quad \approx \quad \textrm{number of}\quad \widetilde{t}_{l+2}=\widetilde{t}_{l}
	\]
	where we have divided the number of prospective twin primes in $S_{l+1}^{(0)}$ by the total number of prospective twin primes in $S_{l+1}$ as the fraction of prospective twin primes in $S_{l+2}^{(0)}$ generated by prospective twin primes in $S_{l+1}^{(0)}$.
	
	Then we have $n_{S_{l+3}^{(0)}}^{\widetilde{t}}$ prospective twin primes, $\widetilde{t}_{l+3}\in S_{l+3}^{(0)}$ derived from all $\widetilde{t}_{l+2}\in S_{l+2}$. The fraction of those derived from the set of $\widetilde{t}_{l+2}=\widetilde{t}_{l}\in S_{l+2}^{(0)}$ is:
	
	\[
	\frac{n_{S_{l+1}^{(0)}}^{\widetilde{t}}}{n_{l+1}^{\widetilde{t}}}\frac{n_{S_{l+2}^{(0)}}^{\widetilde{t}}}{n_{l+2}^{\widetilde{t}}}n_{S_{l+3}^{(0)}}^{\widetilde{t}} \quad \approx \quad \textrm{number of}\quad \widetilde{t}_{l+3}=\widetilde{t}_{l}
	\]
	
	Carrying this process forward up to the number of $\widetilde{t}_{k}=\widetilde{t}_{l}$, where then $P_{k}<\widetilde{t}_{l}\le P_{k+1}^2$, gives:
	
	\begin{equation}\label{E: numtleqtk}
		n_{p_{k}\rightarrow P_{k+1}^2}^{t}	\approx n_{S_{k}^{(0)}}^{\widetilde{t}} \prod_{j=l+1}^{k-1}\frac{n_{S_{j}^{(0)}}^{\widetilde{t}}}{n_{j}^{\widetilde{t}}}
	\end{equation}
	
	Expanding this using Theorem~\ref{T: disrofprotps} 
	and Theorem~\ref{T: noprotpp} we get:
	
	\begin{align}\label{E: proofdemo}\notag
		n_{p_{k}\rightarrow P_{k+1}^2}^{t} \ge& (P_{k-1}-4)\prod_{i=3}^{k-2}(P_{i}-2) \cdot \prod_{j=l+1}^{k-1}\frac{(P_{j-1}-4)\prod_{i=3}^{j-2}(P_{i}-2)}{\prod_{i=3}^{j}(P_{i}-2)}\\\notag
		=&(P_{k-1}-4)\prod_{i=3}^{k-2}(P_{i}-2) \cdot \left(\prod_{j=l+1}^{k-1}\frac{P_{j-1}-4}{P_{j-1}-2}\right)\left(\prod_{j=l+1}^{k-1}\frac{1}{P_{j}-2}\right)\\\notag
		=& \left(\frac{P_{k-1}-4}{P_{k-1}-2}\right)\left(\prod_{j=l}^{k-2}\frac{P_{j}-4}{P_{j}-2}\right)\prod_{i=3}^{l}(P_{i}-2)\\
		=&\left(\prod_{j=l}^{k-1}\frac{P_{j}-4}{P_{j}-2}\right)\prod_{i=3}^{l}(P_{i}-2) = \left(\prod_{j=l}^{k-1}\frac{P_{j}-4}{P_{j}-2}\right)n_{l}^{\widetilde{t}}
	\end{align}

	The first factor is less than one because its individual factors approach $1$ from below, but there is a disproportionate increase in $k$ when $l$ is increased so there are more terms in the product as $l$ and $k$ increase. The last factor is just the number of prospective twin primes in $S_{l}$, which grows by the factor $(P_{l+1}-2)$ as $l$ increases to $l+1$. Table~\ref{T: etmtwnprime} evaluates inequality (\ref{E: proofdemo}) for a few low values of $l$ showing growing accuracy with $l$ where the second factor appears to grow fast enough to keep a finite and growing product.\footnote{Numbers in the "Actual" column from: https://www.hugin.com.au/prime/twin.php}. We will now show formally that $n_{p_{k'}\rightarrow P_{k'+1}^2}^{t} 
	\ge\; n_{p_{k}\rightarrow P_{k+1}^2}^{t}$.

	\begin{table}[!]
		\begin{center}
			\caption{Calculated minimum number of twin primes  $P_{k} \le t \le P_{k+1}^2$ compared to actual.}
			\renewcommand{\arraystretch}{1.5}
			\begin{tabular}{|c|c|c|c|c|}
				\hline
				$l$	& $k$ &$\substack{n_{p_{k}\rightarrow P_{k+1}^2}^{t}\\ >}$&$\substack{ Actual \: t\\P_{k}\le t_{k} < P_{k+1}^2}$&$\frac{n_{p_{k}\rightarrow P_{k+1}^2}^{t}}{Actual\: t}>$  \\
				\hline
				$4$	& $6$ & $7$&$16$&$.44$ \\
				\hline
				$5$	&$15$  &$43$&$74$&$.58$  \\
				\hline
				$6$	& $40$ & $350$&$480$&$.73$ \\
				\hline
				$7$	&$127$  &$3,988$&$4,653$&$.86$  \\
				\hline
				$8$	& $443$ &$52,432$&$57,529$&$.91$  \\
				\hline
			\end{tabular}
			\label{T: etmtwnprime}
		\end{center}
	\end{table}

	Now consider Equation~(\ref{E: proofdemo}) letting $l\rightarrow l+1$ and $k \rightarrow k'$, where $k=\boldsymbol{\pi}\left(\sqrt{\prod_{i=1}^{l}P_{i}}\right)$ and $k'=\boldsymbol{\pi}\left(\sqrt{\prod_{i=1}^{l+1}P_{i}}\right)$.

	\begin{align}\label{E: proofdemo2}\notag
		n_{p_{k'}\rightarrow P_{k'+1}^2}^{t} 
		\ge&
		\left(\prod_{j=l+1}^{k'-1}\frac{P_{j}-4}{P_{j}-2}\right)n_{l+1}^{\widetilde{t}}\\\notag
		\ge& \left(\prod_{j=l}^{k-1}\frac{P_{j}-4}{P_{j}-2}\right)n_{l}^{\widetilde{t}}\cdot \left(\frac{P_{l}-2}{P_{l}-4}\right)\left(\prod_{j=k}^{k'-1}\frac{P_{j}-4}{P_{j}-2}\right)(P_{l+1}-2)\\\notag
		&\textrm{which gives:}\\
		n_{p_{k'}\rightarrow P_{k'+1}^2}^{t} 
		\ge&\; n_{p_{k}\rightarrow P_{k+1}^2}^{t}\cdot \frac{(P_{l}-2)(P_{l+1}-2)}{(P_{l}-4)}\left(\prod_{j=k}^{k'-1}\frac{P_{j}-4}{P_{j}-2}\right)\\\notag
	\end{align}
	
	Noting that $P_{k}+2\le P_{k+1}$ we can use:
	
	\begin{align}\label{E: interstep1}\notag
		\prod_{j=k}^{k'-1}\frac{P_{j}-4}{P_{j}-2}=\prod_{j=k}^{k'-1}\left(1-\frac{2}{P_{j}-2}\right)
		\ge& \prod_{j=0}^{k'-k}\left(1-\frac{2}{P_{k}+2j-2}\right)\\\notag
		\ge& 1-2\sum_{j=0}^{k'-k}\frac{1}{P_{k}+2(j-1)}\\
		\ge&1-\frac{2}{P_{k}-2}-(k'-k)\frac{2}{P_{k}}
	\end{align}
	
	Then using the definition of $k'$:
	
	\begin{multline}
		k'=\boldsymbol{\pi}\left(\sqrt{\prod_{1}^{l+1}P_{i}}\right)\approx \frac{\sqrt{\prod_{1}^{l+1}P_{i}}}{\ln{\sqrt{\prod_{1}^{l+1}P_{i}}}}=\frac{\sqrt{P_{l+1}}\cdot \sqrt{\prod_{1}^{l}P_{i}}}{\ln{\sqrt{P_{l+1}}}+\ln{\sqrt{\prod_{1}^{l}P_{i}}}}\\
		=\frac{\sqrt{\prod_{1}^{l}P_{i}}}{\ln{\sqrt{\prod_{1}^{l}P_{i}}}}\left(\frac{\sqrt{P_{l+1}}}{1+\frac{\ln{\sqrt{P_{l+1}}}}{\ln{\sqrt{\prod_{1}^{l}P_{i}}}}}\right)
	\end{multline}
	giving:
	\[
	k' \approx k \sqrt{P_{l+1}}
	\]
	
	Using this in result~(\ref{E: interstep1}) gives:
	
	\[
	\prod_{j=k}^{k'-1}\left(1-\frac{2}{P_{j}-2}\right)
	\ge 1-\frac{2}{P_{k}-2}-k\left(\sqrt{P_{l+1}}-1\right)\frac{2}{P_{k}}
	\]
	Then ignoring the $-1$ in the last term and using $k\approx \frac{P_{k}}{\ln{P_{k}}}$ gives:
	
	\[
	\prod_{j=k}^{k'-1}\left(1-\frac{2}{P_{j}-2}\right)
	\ge 1-\frac{2}{P_{k}-2}-\frac{2\sqrt{P_{l+1}}}{\ln{P_{k}}}
	\]
	Using this in the inequality~(\ref{E: proofdemo2}) gives:
	
	\[
	n_{p_{k'}\rightarrow P_{k'+1}^2}^{t}\ge \; n_{p_{k}\rightarrow P_{k+1}^2}^{t}\cdot \frac{(P_{l}-2)(P_{l+1}-2)}{(P_{l}-4)}\left[1-\frac{2}{P_{k}-2}-\frac{2\sqrt{P_{l+1}}}{\ln{P_{k}}}  \right]
	\]
	
	The factor in front of the square brackets is $>(P_{l+1}-2)$.
	Since $P_{k}$ is the largest prime whose square is less than $\prod_{i=1}^{l}P_{i}$, if we take $P_{k}\approx \sqrt{\prod_{i=1}^{l}P_{i}}$ then the last term in the square brackets is seen to be less than $1$ for $l>5$ and gets ever smaller for larger $l$ and the second term in the square brackets is much less than $1$ and gets ever smaller as $l$ and $P_{k}$ increase. Therefore, given Table~\ref{T: etmtwnprime} showing growing values of $n_{p_{k}\rightarrow P_{k+1}^2}^{t}$ one can see that for $l>4$:
	
	\[
	n_{p_{k'}\rightarrow P_{k'+1}^2}^{t}> n_{p_{k}\rightarrow P_{k+1}^2}^{t}
	\]  
	
	This completes the proof.
	
\end{proof}

Given Theorem~\ref{T: numtwinprimes} we can prove the following theorem:

\begin{theorem}
	Given any natural number, $N$ there is always a twin prime greater than $N$.	
\end{theorem}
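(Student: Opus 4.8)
The plan is to obtain this as an immediate corollary of Theorem~\ref{T: numtwinprimes}. That theorem attaches to every integer $l \ge 4$ a prime $P_{k} = P_{\boldsymbol{\pi}\left(\sqrt{\prod_{i=1}^{l}P_{i}}\right)}$ together with the bound $n_{P_{k}\rightarrow P_{k+1}^2}^{t}\ge \left(\prod_{j=l}^{k-1}\frac{P_{j}-4}{P_{j}-2}\right)n_{l}^{\widetilde{t}}$ on the number of twin primes in the open interval $\left(P_{k},P_{k+1}^2\right)$. The first thing I would record is that this right-hand side is strictly positive for every admissible $l$: each factor $\frac{P_{j}-4}{P_{j}-2}$ is positive whenever $P_{j}>4$, which holds for all $j\ge 3$, and $n_{l}^{\widetilde{t}}=\prod_{i=3}^{l}(P_{i}-2)\ge 1$ by Theorem~\ref{T: noprotpp}. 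Since $n_{P_{k}\rightarrow P_{k+1}^2}^{t}$ is by definition a nonnegative integer bounded below by this positive quantity, it is at least $1$; hence the interval $\left(P_{k},P_{k+1}^2\right)$ contains a genuine twin prime for every $l\ge 4$.

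The second ingredient is that the left endpoint $P_{k}$ of this interval tends to infinity with $l$. By construction $P_{k}$ is the largest prime with $P_{k}^2<\prod_{i=1}^{l}P_{i}$, so in particular $P_{k}^2\ge \prod_{i=1}^{l-1}P_{i}$, and the primorial $\prod_{i=1}^{l-1}P_{i}$ grows without bound as $l\to\infty$; therefore $P_{k}\to\infty$ as $l$ increases. Consequently, given any natural number $N$, I would choose $l\ge 4$ large enough that the associated prime satisfies $P_{k}>N$. Applying Theorem~\ref{T: numtwinprimes} for that value of $l$ yields a twin prime $t$ with $P_{k}<t<P_{k+1}^2$, and in particular $t>P_{k}>N$. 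This is a twin prime greater than $N$, which proves the statement. (If desired, one can further invoke the monotonicity $n_{p_{k'}\rightarrow P_{k'+1}^2}^{t}>n_{p_{k}\rightarrow P_{k+1}^2}^{t}$ established inside the proof of Theorem~\ref{T: numtwinprimes} to conclude that in fact an unbounded number of twin primes exceed $N$, but the present claim needs only the non-vacancy of a single interval.)

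The step I expect to require the most care — and the only genuine obstacle — is the passage from ``the displayed lower bound is a positive real number'' to ``the interval contains at least one twin prime.'' This is legitimate precisely because $n_{P_{k}\rightarrow P_{k+1}^2}^{t}$ is defined in Theorem~\ref{T: numtwinprimes} as an integer count of twin primes, so a strictly positive real lower bound forces that count to be $\ge 1$; I would state this explicitly rather than leave it implicit. Everything else is routine bookkeeping: verifying $P_{j}>4$ for $j\ge 3$, quoting $n_{l}^{\widetilde{t}}\ge 1$ from Theorem~\ref{T: noprotpp}, and noting the unbounded growth of the primorial to force $P_{k}>N$ for suitable $l$.
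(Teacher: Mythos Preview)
Your proposal is correct and follows essentially the same route as the paper: choose $l$ large enough that the associated $P_{k}=P_{\boldsymbol{\pi}\left(\sqrt{\prod_{i=1}^{l}P_{i}}\right)}$ exceeds $N$, and then invoke Theorem~\ref{T: numtwinprimes} to produce a twin prime beyond $P_{k}$. The paper's own proof is a two-line version of exactly this argument; you simply spell out more carefully why the lower bound in Theorem~\ref{T: numtwinprimes} is strictly positive and why $P_{k}\to\infty$, which the paper leaves implicit.
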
	

\begin{proof}
	Pick integer $l$ so that $P_{k}=P_{\boldsymbol{\pi}\left(\sqrt{\prod_{i=1}^{l}P_{i}}\right)}>N$.Then we know from Theorem~(\ref{T: numtwinprimes}) that there is always a twin prime $>P_{k}$.	
\end{proof}

\end{document}